\def\doi{Not assigned}
\title{\uppercase{An inertial shrinking projection algorithm for split common fixed point problems}}
\author{ Zheng Zhou$^{1,\dag}$, Bing Tan$^1$ and Songxiao Li$^{1}$}
\date{}
\begin{document}
\baselineskip 12pt

\maketitle
\begin{abstract}
In this paper, the purpose is to introduce and study a new modified shrinking projection algorithm with inertial effects, which solves split common fixed point problems in Banach spaces. The corresponding strong convergence theorems are obtained without the assumption of semi-compactness on mappings. Finally, some numerical examples are presented to illustrate the results in this paper.
\end{abstract}

\begin{keyword}
Split common fixed point problem, shrinking projection method, inertial method, strong convergence.
\end{keyword}

\begin{MSC}
47H09, 47H10, 47J25, 47N40.
\end{MSC}

\thispagestyle{first}\renewcommand{\thefootnote}{\fnsymbol{footnote}}
\footnotetext{\hspace*{-5mm}
\renewcommand{\arraystretch}{1}
\begin{tabular}{@{}r@{}p{10cm}@{}}
$^\dag$& the corresponding author. Email address: zhouzheng2272@163.com (Z. Zhou)\\
$^1$&Institute of Fundamental and Frontier Sciences, University of Electronic Science and Technology of China, Chengdu, China\\
Note:& This paper has been accepted by Journal of Applied Analysis and Computation.
\end{tabular}}

\vspace{-2mm}

\section{Introduction}
To model inverse problems in phase retrievals and medical image reconstruction \cite{s1}, Censor and Elfving \cite{s2} introduced the concept of the split feasibility problem (for short, SFP) in framework of finite-dimensional Hilbert spaces in 1994. It has been founded that the SFP can be used in many areas of applications, such as image restoration, computer tomograph, radiation therapy treatment planning and other areas of mathematical research \cite{s3,s4,s5,s6,s7,s8,lab1}.

As a generalization of the SFP in 2009, Censor and Segal \cite{s9} introduced the following split common fixed point problem (for short, SCFPP): Let $H_1$ and $H_2$ be two Hilbert spaces and $A:H_1\rightarrow H_2$ be a bounded linear operator, $T:H_1\rightarrow H_1$ and $S:H_2\rightarrow H_2$ be two mappings ($F(T)$ and $F(S)$ denote the fixed point sets of $T$ and $S$, respectively). The split common fixed point problem for mappings $T$ and $S$ which is to find a point $x^*$ satisfing
\begin{equation}
\label{eqq}
	x^*\in F(T) \text{ and } Ax^{*}\in F(S).
\end{equation}
The solution set of (1.1) is denoted by $\Gamma$, i.e., $\Gamma=\{x^*| x^*\in F(T),Ax^*\in F(S)\}$.

Since then, the SCFPP has been widely studied by many authors in Hilbert spaces (see \cite{lab5,lab6,s12,lab2,lab3}). Usually in order to achieve strong convergence properties of the SCFPP, we often do this directly by considering the assumption of semi-compactness on the mappings. In addition, there are papers using some algorithms to replace the assumption of the mappings, such as Halpern iterative algorithm, viscosity iterative algorithm, shrinking projection algorithm. Ulteriorly, there are a few studies on the split common fixed point problem in the framework of Banach spaces. For example in recent years, Takahashi and Yao \cite{s22} and Takahashi \cite{s23,s24} studied the split common fixed point problem in the setting of one Hilbert space and one Banach space, and obtained some weak convergence theorems and strong convergence theorems. Furthermore, in the framework of two Banach spaces, there are many studies established as follows:

In 2015, Tang et al. \cite{s25} studied the SCFPP for asymptotically nonexpansive mappings and quasi-strictly pseudo-contractive mappings. Then, the strong convergence theorem was also proved with the condition of semi-compactness on the mappings. In 2016, Shehu et al. \cite{s26} studied split feasibility problems and fixed point problems for left Bregman strongly nonexpansive mappings, and showed strong convergence theorems by Halpern iterative method.

Recently, Ma et al. \cite{s28} also studied split feasibility problems and fixed point problems in Banach spaces, and obtained the strong convergence theorem by the following shrinking projection iterative algorithm

\[
\left\{
\begin{aligned}
&{z_{n} =J_1^{-1}(J_1x_n+\gamma A^*J_2(P_Q-I)Ax_n),}\\
&{y_{n}=J_1^{-1}((1-\alpha_n)J_1z_n+\alpha_nJ_1Sz_n),}\\
&{C_{n+1}=\{v\in C_n:\phi(v,y_n)\leq\phi(v,x_n), \phi(v,z_n)\leq\phi(v,x_n)\},}\\
&{x_{n+1}=\Pi_{C_{n+1}}x_1, n\geq 1,}
\end{aligned}
\right.
\]
where $E_1$ is a $ 2 $-uniformly convex and $ 2 $-uniformly smooth real Banach space, $E_2$ is a smooth, strictly convex and reflective Banach space and $Q$ is a nonempty closed convex subset of $E_2$, $A:E_1\rightarrow E_2$ is a bounded linear operator with the adjoint operator $A^*$, $S:E_1\rightarrow E_1$ is a closed quasi-$\phi$-nonexpansive mapping, $P_Q:E_2\rightarrow Q$ is the metric projection and $\Pi_{C_{n+1}}:E_1\rightarrow C_{n+1}$ is the generalized projection.

In view of the above studies and methods and in order to accelerate better the convergence rate of the iterative algorithms. The inertial effects have been studied recently by many authors in terms of variational inequality problems, inclusion problems, equilibrium problems, etc., see \cite{ceng2019inertial,shehu2019modified,s29,lab4,s32,s38} and the references therein. The main characteristic of the inertial method is that the new iterate process is produced by making use of two values of the previous iterative point. In 2001, Alvarez and Attouch \cite{s29} studied the problem of approximating the null point of a maximal monotone operator and proposed the following inertial proximal algorithm:

\[
\left\{
\begin{aligned}
&{y_n=x_n +\alpha_n (x_n-x_{n-1}),}\\
&{x_{n+1}=(I+\lambda T)^{-1}y_n, \forall n \geq 1.}
\end{aligned}
\right.
\]
Then, they obtained the weak convergence of the algorithm.

For these research, the ideas of this paper are as follows: this article introduce a new shrinking projection iterative algorithm with inertial effects to solve   problem \eqref{eqq} for firmly nonexpansive-like mappings in the framework of $p$-uniformly convex and uniformly smooth real Banach spaces. Meanwhile, the strong convergence theorems of this algorithm are obtained without assumption of semi-compactness on mappings. As applications, the results are utilized to split fixed point problems and variational inclusion problems, split fixed point problem and equilibrium problems. Furthermore, some numerical examples are used to demonstrate and show the efficiency of our main results. To this end, some basic properties and relevant lemmas will be introduced in next section which will be used in the proof for the convergence analysis of the proposed algorithm.

\section{Preliminaries}

Throughout this paper, we use notations: $\rightarrow$ to denote the strong convergence and $\rightharpoonup$ to denote the weak convergence. The set of all fixed points of $T$ is denoted by $F(T)$.

Let $E$ be a Banach space. A function $\delta_E:[0,2]\rightarrow[0,1]$  is called the modulus of convexity of $E$  as follows:
\[
\delta_E(\varepsilon) = \inf\{1-\frac{\|x+y\|}{2}:\|x|\leq1,\|y|\leq1,\|x-y|\geq \varepsilon\}.
\]
A function $\rho_E:[0,+\infty]\rightarrow[0,+\infty]$, which is called the modulus of smoothness of $E$  as follows:
\[
\rho_E(t)=\sup\{\frac{1}{2}(\|x+y\|+\|x-y\|)-1:\|x\|\leq1,\|y\|\leq t\}.
\]
\begin{definition}
A Banach space $E$ is said to be
\begin{enumerate}[(I)]
	\item uniformly convex if for any $x,y\in E$ with $\|x\|=\|y\|=1$ and $\|x-y\|\geq\varepsilon$, there exists $\eta=\eta(\varepsilon)>0$ for all $\varepsilon\in(0,2]$ such that $\|\frac{x+y}{2}\|\leq1-\eta.$ This is equivalent to $\delta_E(\varepsilon)>0$, for all $\varepsilon\in(0,2].$
	\item  uniformly smooth if and only if $\lim_{t\rightarrow 0}\frac{\rho_E(t)}{t}=0.$	
\end{enumerate}
\end{definition}

A Banach space is called $p$-uniformly convex if there exists a constant $c>0$ such that $\delta_E(\varepsilon)>c\varepsilon^p$ for all $\varepsilon\in(0,2]$, where the constant $\frac{1}{c}$ is called the $p$-uniformly convexity constant. It is obvious that a $p$-uniformly convex Banach space is uniformly convex. A Banach space is said to be $q$-uniformly smooth if there exists a constant $C_q>0$ such that $\rho_E(t)\leq C_qt^q$ for all $t>0$, where $C_q$ is the $q$-uniformly smoothness constant. In addition, $E$ is a $p$-uniformly convex and uniformly smooth Banach space if and only if its dual $E^*$ is a $q$-uniformly smooth and uniformly convex Banach space.

Let $E$ be a Banach space with the dual $E^*$. The duality mapping $J_E^p:E\rightarrow 2^{E^*}$ is defined by
$
J_E^p(x)=\{x^*\in E^*:\langle x,x^*\rangle=\|x\|^p, \|x^*\|=\|x\|^{p-1}\},$ $\ p>1,$ $\forall x\in E.
$

\begin{definition}\label{def2.2}
 For a G\^{a}teaux differentiable convex function $f:E\rightarrow R$, the function
\begin{equation}\label{eq1}
 \Delta_f(x,y):=f(y)-f(x)-\langle f^\prime(x),y-x\rangle,\ \forall x,y \in E
\end{equation}
 is called the Bregman distance of $x$ to $y$ with respect to the function $f$.
\end{definition}

In addition, the duality mapping $J_E^p$ is the derivative of the function $f_p(x)=\frac{1}{p}\|x\|^p$. Then the Bregman distance with respect to $f_p$ can be written as
 	\begin{align*}
 	\Delta_p(x,y)&=\frac{1}{q}\|x\|^p-\langle J_E^px,y\rangle+\frac{1}{p}\|y\|^p\\
 	&=\frac{1}{p}(\|y\|^p-\|x\|^p)+\langle J_E^px,x-y\rangle\\
 	&=\frac{1}{q}(\|x\|^p-\|y\|^p)-\langle J_E^px-J_E^py,y\rangle.
 	\end{align*}

\begin{definition}\label{def2.3}
Let $C$ be a nonempty closed convex subset of a Banach space $E$. The mapping $T:C\rightarrow E$ is said to be
\begin{enumerate}[(I)]
	\item left Bregman quasi-nonexpansive mapping if $F(T)\neq\emptyset$ and
	\begin{equation}\label{eqw}
	\Delta_p(Tx,x^*)\leq\Delta_p(x,x^*),\ \forall x\in C,\ x^*\in F(T);	
	\end{equation}
	\item firmly nonexpansive-like mapping if
	\begin{equation}\label{eqe}
	\langle Tx-Ty, J_E^p(x-Tx)-J_E^p(y-Ty)\rangle\geq 0,\ \forall x,y\in C.
	\end{equation}	
\end{enumerate}
\end{definition}
Obviously, if $E$ is a Hilbert space, the firmly nonexpansive-like mapping reduce to the firmly nonexpansive mapping, i.e., $\langle Tx-Ty,x-y\rangle\geq \|Tx-Ty\|^2, \forall x,y\in C$.
\begin{example}
\begin{enumerate}[(1)]
	\item Let $E$ be a smooth, strictly convex and reflexive Banach space and $C$ be a nonempty closed convex subset of $E$. Then, the metric projection $P_C$ is a firmly nonexpansive-like mapping.
	\item Let $E$ be a real number space $R$ with Euclidean norm. A mapping $T:[-10, 10]\rightarrow [-10, 10]$ is defined by
$
Tx=\frac{1}{4}x,$ $\forall x\in [-10, 10].
$
	For any $x,y\in [-10, 10]$, we easily get the following result
	\begin{align*}
	\langle Tx-Ty, J_E^p(x-Tx)-J_E^p(y-Ty)\rangle&=\langle \frac{1}{4}x-\frac{1}{4}y, (x-\frac{1}{4}x)^3-(y-\frac{1}{4}Ty)^3 \rangle\\
	&=\frac{1}{4}\times(\frac{3}{4})^3(x-y)(x^3-y^3)\\
	&=\frac{1}{4}\times(\frac{3}{4})^3(x-y)^2(x^2+xy+y^2)\geq 0,
	\end{align*}
which implies that $T$ is a firmly nonexpansive-like mapping.
\end{enumerate}
\end{example}

Let $T:C\rightarrow E$ be a mapping. A point $u$ is said to be an asymptotic fixed point of $T$ if there exists a sequence $\{x_n\}$ in $C$ such that $x_n\rightharpoonup u$ and $x_n-Tx_n\rightarrow 0$. The set of all asymptotic fixed points of $T$ is denoted by $\widehat{F}(T)$.

\begin{lemma}\label{lemma2.5}
 Let $E$ be a smooth, strictly convex and reflexive Banach space, and $C$ be a nonempty closed convex subset of $E$. $T:C\rightarrow E$ is a firmly nonexpansive-like mapping. Then $F(T)$ is a closed convex subset of $E$ and $\widehat{F}(T)=F(T)$.
\end{lemma}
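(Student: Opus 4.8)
The plan is to establish the three claims---convexity of $F(T)$, the identity $\widehat{F}(T)=F(T)$, and closedness of $F(T)$---by repeatedly exploiting the defining inequality \eqref{eqe}, the elementary fact $J_E^p(0)=0$, and the norm relation $\|J_E^p(x)\|=\|x\|^{p-1}$. For convexity I would fix $x^*,y^*\in F(T)$ and $t\in[0,1]$, put $z=tx^*+(1-t)y^*$, and apply \eqref{eqe} to the two pairs $(z,x^*)$ and $(z,y^*)$. Since $x^*,y^*$ are fixed points, $x^*-Tx^*=y^*-Ty^*=0$, and as $J_E^p(0)=0$ these reduce to $\langle Tz-x^*,\,J_E^p(z-Tz)\rangle\geq0$ and $\langle Tz-y^*,\,J_E^p(z-Tz)\rangle\geq0$. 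Taking the convex combination with weights $t$ and $1-t$ and using linearity in the first slot gives $\langle Tz-z,\,J_E^p(z-Tz)\rangle\geq0$, that is $-\|z-Tz\|^p\geq0$, which forces $Tz=z$; hence $z\in F(T)$ and $F(T)$ is convex.

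For the identity $\widehat{F}(T)=F(T)$, the inclusion $F(T)\subseteq\widehat{F}(T)$ is immediate from the constant sequence. For the reverse inclusion I would take $u\in\widehat{F}(T)$ with $x_n\rightharpoonup u$ and $x_n-Tx_n\to0$, and apply \eqref{eqe} to $(x_n,u)$, obtaining $\langle Tx_n-Tu,\,J_E^p(x_n-Tx_n)-J_E^p(u-Tu)\rangle\geq0$. Splitting this into two pairings, the first is controlled by $\|Tx_n-Tu\|\,\|x_n-Tx_n\|^{p-1}$, which tends to $0$ because $\{Tx_n-Tu\}$ is bounded and $x_n-Tx_n\to0$. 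For the second pairing I note that $Tx_n=x_n-(x_n-Tx_n)\rightharpoonup u$, so $Tx_n-Tu\rightharpoonup u-Tu$ and therefore $\langle Tx_n-Tu,\,J_E^p(u-Tu)\rangle\to\langle u-Tu,\,J_E^p(u-Tu)\rangle=\|u-Tu\|^p$. Passing to the limit in the inequality yields $-\|u-Tu\|^p\geq0$, whence $Tu=u$ and $u\in F(T)$.

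Closedness then follows at once from $\widehat{F}(T)=F(T)$: if $\{x_n\}\subseteq F(T)$ and $x_n\to x$ in norm, then $x_n\rightharpoonup x$ and $x_n-Tx_n=0\to0$, so $x\in\widehat{F}(T)=F(T)$. The routine portion is the purely algebraic manipulation used for convexity. The step I expect to be the main obstacle is the passage to the limit in $\widehat{F}(T)\subseteq F(T)$: one must correctly isolate the two factors so that the genuinely weak convergence $Tx_n-Tu\rightharpoonup u-Tu$ is tested only against the fixed functional $J_E^p(u-Tu)$, while the other product is annihilated by boundedness times the norm-null sequence $J_E^p(x_n-Tx_n)$. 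Keeping these two mechanisms separate---rather than attempting to pass both factors to a weak limit simultaneously---is what makes the argument go through.
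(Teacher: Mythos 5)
Your proof is correct and follows essentially the same route as the paper: convexity is obtained by taking the convex combination of the defining inequality \eqref{eqe} evaluated at the two fixed points (yielding $\langle Tz-z, J_E^p(z-Tz)\rangle\geq 0$, i.e.\ $\|z-Tz\|^p\leq 0$), and $\widehat{F}(T)\subseteq F(T)$ by splitting the monotonicity-type inequality exactly as the paper does, so that the norm-null factor $J_E^p(x_n-Tx_n)$ (with $\|J_E^p(x_n-Tx_n)\|=\|x_n-Tx_n\|^{p-1}\to 0$ against the bounded sequence $Tx_n-Tu$) annihilates one pairing while the weak convergence $Tx_n\rightharpoonup u$ is tested only against the fixed functional $J_E^p(u-Tu)$. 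The one harmless deviation is that you deduce closedness of $F(T)$ as an immediate corollary of $\widehat{F}(T)=F(T)$ via the constant-difference sequence, whereas the paper proves closedness directly from \eqref{eqe} with a strongly convergent sequence of fixed points; your version is slightly more economical and equally valid.
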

 \begin{proof}
In order to prove that $F(T)$ is a closed convex set, we assume that $F(T)$ is nonempty. Let $\{x_n\}$ be a sequence in $F(T)$ such that $x_n\rightarrow u$. From the definition of $T$, we have
$
\langle x_n-Tu, -J_E^p(u-Tu)\rangle\geq 0.
$
This inequality is equivalent also to
\[
\|u-Tu\|^p\leq \langle x_n-u, J_E^p(Tu-u)\rangle\leq \|x_n-u\|\|u-Tu\|^{p-1}.
\]
Then we obtain $\|u-Tu\|=0$ as $n\rightarrow \infty$. This implies $u=Tu$. Hence, $u\in F(T)$ and $F(T)$ is closed.

Next, we show that $F(T)$ is convex. For any $x,y\in F(T)$ and $t\in (0,1)$, putting $u=tx+(1-t)y$. From the definition of $T$, we get
$
\langle x-Tu, -J_E^p(u-Tu)\rangle\geq 0
$
and
$
\langle y-Tu, -J_E^p(u-Tu)\rangle\geq 0.
$
Combine the above two inequalities, we have
\begin{align*}
	\langle tx+(1-t)y-Tu, -J_E^p(u-Tu)\rangle\geq 0&\Leftrightarrow \langle u-Tu, -J_E^p(u-Tu)\rangle\geq 0\\
	&\Leftrightarrow \|u-Tu\|^p\leq 0.
\end{align*}
This means that $u=Tu$. So, $F(T)$ is closed and convex.

Last, we show that $\widehat{F}(T)=F(T)$. It is obvious that $F(T)\subset \widehat{F}(T)$. Then, we only show that $\widehat{F}(T)\subset F(T)$. For any $z\in \widehat{F}(T)$, there exists a sequence $\{x_n\}$ in $C$ such that $x_n\rightharpoonup z$ and $x_n-Tx_n\rightarrow 0$. From the definition of $T$, we have
\[
\langle Tx_n-Tz, J_E^p(x_n-Tx_n)-J_E^p(z-Tz)\rangle \geq 0.
\]
This is equivalent to
\begin{align*}
	\langle Tx_n-Tz, J_E^p(x_n-Tx_n)\rangle &\geq\langle Tx_n-Tz, J_E^p(z-Tz)\rangle\\
	&=\langle Tx_n-z+z-Tz, J_E^p(z-Tz)\rangle\\
	&=\langle Tx_n-z, J_E^p(z-Tz)\rangle+\|z-Tz\|^p.
\end{align*}
The inequality can be transformed the following inequality
\begin{align*}
	\|z-Tz\|^p&\leq \langle z-Tx_n, J_E^p(z-Tz)\rangle+\langle Tx_n-Tz, J_E^p(x_n-Tx_n)\rangle\\
	&=\langle z-x_n, J_E^p(z-Tz)\rangle+\langle x_n-Tx_n, J_E^p(z-Tz)\rangle\\
	 & \quad +\langle Tx_n-x_n, J_E^p(x_n-Tx_n)\rangle+\langle x_n-Tz, J_E^p(x_n-Tx_n)\rangle.
\end{align*}
From the setting of $n\rightarrow \infty$, we have $\|z-Tz\|=0$. Hence, $z=Tz$, i.e., $z\in F(T)$.	
\end{proof}
\begin{definition}
Let $C$ be a nonempty closed convex subset of a real Banach space $E$. A mapping $T:C\rightarrow C$ is closed (or $T$ has closed graph), that is, if the sequence $\{x_n\}$ in $C$ converges strongly to a point $x\in C$ and $Tx_n\rightarrow y$, then $Tx=y$.
\end{definition}
\begin{lemma}
\cite{s33} Let $E$ be a Banach space and $J_E^p$ be the duality mapping of $E$. Then, the following statements hold:
\begin{enumerate}[(I)]
	\item $J_E^p(x)$ is nonempty bounded closed and convex, for any $x\in E$;
	\item if $E$ is a reflexive Banach space, then $J_E^p$ is a mapping from $E$ onto $E^*$;
	\item if $E$ is a smooth Banach space, then $J_E^p$ is single valued;
	\item if $E$ is a uniformly smooth Banach space, then $J_E^p$ is norm-to-norm uniformly continuous on each bounded subset of $E$.
\end{enumerate}
\end{lemma}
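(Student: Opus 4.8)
The plan is to recognise $J_E^p$ as the subdifferential of the convex function $f_p(x)=\frac{1}{p}\|x\|^p$ (a fact already noted in Definition~\ref{def2.2} and the discussion following it) and to deduce each assertion from standard convex-analytic and geometric facts about Banach spaces. I would begin with (I). For nonemptiness, fix $x\neq 0$ and invoke the Hahn--Banach theorem to produce a support functional $x_0^*\in E^*$ with $\|x_0^*\|=1$ and $\langle x,x_0^*\rangle=\|x\|$; rescaling by $\|x\|^{p-1}$ yields an element of $J_E^p(x)$, the case $x=0$ being trivial since $J_E^p(0)=\{0\}$. Boundedness is immediate from the norm constraint $\|x^*\|=\|x\|^{p-1}$. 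For convexity I would take $x_1^*,x_2^*\in J_E^p(x)$ and a convex combination $x_t^*=tx_1^*+(1-t)x_2^*$, check $\langle x,x_t^*\rangle=\|x\|^p$ directly, bound $\|x_t^*\|\leq\|x\|^{p-1}$ by the triangle inequality, and recover the reverse inequality from $\|x\|^p=\langle x,x_t^*\rangle\leq\|x\|\,\|x_t^*\|$; closedness follows the same way by passing to the limit in the two defining equalities.

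For (II), assuming $E$ reflexive, I would fix an arbitrary $x^*\in E^*$ and minimise the functional $g(x)=\frac{1}{p}\|x\|^p-\langle x^*,x\rangle$ over $E$. This $g$ is convex, continuous and coercive (since $p>1$), so by reflexivity and weak lower semicontinuity it attains a minimum at some $x_0\in E$; since $g=f_p-x^*$, the optimality condition $0\in\partial g(x_0)=J_E^p(x_0)-x^*$ then reads $x^*\in J_E^p(x_0)$, which gives surjectivity. For (III), smoothness of $E$ means the norm is G\^{a}teaux differentiable off the origin, hence $f_p$ is G\^{a}teaux differentiable everywhere, and the subdifferential of a G\^{a}teaux-differentiable convex function is the singleton consisting of its derivative, so $J_E^p$ is single valued.

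Part (IV) is the step I expect to be the main obstacle. Here I would use the duality between uniform smoothness of $E$ and uniform convexity of $E^*$: when $E$ is uniformly smooth the norm is uniformly Fr\'echet differentiable on bounded sets, and this uniform differentiability translates into norm-to-norm uniform continuity of the derivative map $x\mapsto J_E^p(x)$ on bounded subsets. The delicate point is controlling the modulus of continuity uniformly, which requires quantitative estimates relating the modulus of smoothness $\rho_E$ to the increments of $J_E^p$ rather than a soft pointwise argument; I would either extract such estimates from $\rho_E$ directly or appeal to the standard characterisation that uniform Fr\'echet differentiability of the norm is equivalent to uniform smoothness. Since the statement is quoted from \cite{s33}, one may alternatively cite these results verbatim, but the scheme above reconstructs them from first principles.
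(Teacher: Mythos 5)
The paper contains no proof of this lemma to compare against: it is imported verbatim from Cioranescu \cite{s33} as a known result, and the authors never argue it. Judged on its own, your reconstruction is essentially the standard textbook proof and is sound in outline. Parts (I)--(III) are complete as sketched: Hahn--Banach plus rescaling by $\|x\|^{p-1}$ for nonemptiness, the triangle-inequality sandwich $\|x\|^{p-1}\leq\|x_t^*\|\leq\|x\|^{p-1}$ for convexity, passage to the limit in the two defining equalities for closedness, the coercive-minimization argument with weak lower semicontinuity and reflexivity for (II), and the singleton-subdifferential fact for (III). One step you treat as free but which carries real content: your argument for (II) uses the inclusion $\partial f_p(x_0)\subseteq J_E^p(x_0)$, i.e.\ the \emph{nontrivial} direction of Asplund's identification $\partial f_p=J_E^p$. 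The easy direction $J_E^p(x)\subseteq\partial f_p(x)$ follows from Young's inequality $\langle x^*,y\rangle\leq\frac{1}{q}\|x\|^p+\frac{1}{p}\|y\|^p$; the converse requires the radial perturbation $y=tx$ with $t\to1^{\pm}$ to extract $\langle x^*,x\rangle=\|x\|^p$, and then testing over $\|y\|=\|x\|$ to force $\|x^*\|=\|x\|^{p-1}$. This is routine but should be recorded, since your entire scheme rests on the identification (also note $\partial(f_p-x^*)=\partial f_p-x^*$ needs the Moreau--Rockafellar sum rule, harmless here since the perturbation is linear and continuous).

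Part (IV) is, as you acknowledge, left at the level of citing the classical equivalence between uniform smoothness and uniform Fr\'echet differentiability of the norm; that is not yet a proof, merely a restatement of what must be shown. Given that the lemma itself is quoted from \cite{s33}, deferring to that source is legitimate, but if you want the self-contained argument you gesture at, the usual route is Lindenstrauss duality ($E$ uniformly smooth iff $E^*$ uniformly convex), the fact that on a reflexive, smooth, strictly convex space $J_E^p$ and $J_{E^*}^q$ are mutually inverse (as the paper itself uses via $J_{E_1}^p=(J_{E_1^*}^q)^{-1}$), and a quantitative estimate of Xu--Roach type converting the modulus of convexity of $E^*$ into a modulus of continuity for $J_E^p$ on bounded sets; a soft pointwise argument will not deliver the uniformity, exactly as you suspect.
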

\begin{remark}\label{remark2.8}
 By the definition of $\Delta_p$, we easily obtain
\begin{equation}\label{eqr}
\Delta_p(x,y)=\Delta_p(x,z)+\Delta_p(z,y)+\langle z-y,J_E^px-J_E^pz\rangle,\ \forall x,y,z\in E,
\end{equation}
and
\begin{equation}\label{eqt}
\Delta_p(x,y)+\Delta_p(y,x)=\langle x-y,J_E^px-J_E^py\rangle,\ \forall x,y,z\in E.
\end{equation}
In addition, it is easy to see from the above that the Bregman distance is not symmetrical, and for $p$-uniformly convex Banach spaces, we have
\begin{equation}\label{eqy}
\tau\|x-y\|^p\leq\Delta_p(x,y)\leq\langle x-y,J_E^px-J_E^py\rangle,\ \forall x,y\in E, \tau>0.
\end{equation}
This indicates that Bregman distance is non-negative.
\end{remark}

\begin{definition}
$\Pi_C:E\rightarrow C$ is said to be the Bregman projection mapping, that is,
\begin{equation}\label{equ}
\Pi_Cx=\underset{y\in C}{\operatorname{argmin}}\ \Delta_p(x,y),\ \forall x\in E.
\end{equation}
In other words, $\Pi_Cx$ corresponds a unique element $x_0\in C$ such that
\[
\Delta_p(x,x_0)=\underset{y\in C}{\min}\ \Delta_p(x,y),\ \forall x\in E.
\]
The Bregman projection can also be characterized by the following inequality
\begin{equation}\label{eqi}
\langle J_E^px-J_E^p\Pi_Cx,z-\Pi_Cx\rangle\leq0,\ \forall z\in C,
\end{equation}
this is equivalent to
\begin{equation}\label{eqo}
\Delta_p(\Pi_Cx,z)\leq\Delta_p(x,z)-\Delta_p(x,\Pi_Cx),\ \forall z\in C.
\end{equation}
\end{definition}

\begin{lemma}\label{lemmaq}
 \cite{s34} Let $E$ be a $q$-uniformly smooth Banach space with the $q$-uniformly smoothness constant $C_q>0$. For any $x,y\in E$, the following inequality holds:
\[
\|x-y\|^q\leq\|x\|^q-q\langle y,J_{E}^qx\rangle+C_q\|y\|^q.
\]
\end{lemma}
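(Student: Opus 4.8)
The plan is to read the claim as the standard smoothness (descent) inequality for the convex function $f_q(x)=\tfrac1q\|x\|^q$, whose derivative is the duality map $J_E^q$ (exactly as noted in the excerpt for $f_p$). Dividing the asserted inequality by $q$ and regarding $-y$ as an increment at $x$, it is equivalent to
\[
f_q(x-y)\le f_q(x)-\langle y,J_E^q x\rangle+\tfrac{C_q}{q}\|y\|^q,
\]
i.e. $f_q$ satisfies the $q$-power smoothness inequality with constant governed by the $q$-uniform smoothness constant $C_q$. Viewed this way the statement is a quantitative consequence of smoothness, and I would prove it by a one-dimensional reduction along the segment from $x$ to $x-y$.

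First I would exploit that $E$ is $q$-uniformly smooth, hence uniformly smooth, so the norm is Fr\'echet differentiable off the origin and $f_q$ is everywhere $C^1$ with $f_q'=J_E^q$; moreover $J_E^q$ is single-valued and norm-to-norm continuous on bounded sets. Consequently $\psi(t):=\|x-ty\|^q$ is continuously differentiable on $[0,1]$ with $\psi'(t)=-q\langle y,J_E^q(x-ty)\rangle$. Integrating $\psi'$ and subtracting the constant $\psi'(0)=-q\langle y,J_E^q x\rangle$ gives
\[
\|x-y\|^q-\|x\|^q+q\langle y,J_E^q x\rangle=q\int_0^1\big\langle y,\,J_E^q x-J_E^q(x-ty)\big\rangle\,dt,
\]
so the entire estimate reduces to bounding the increment $\langle y,\,J_E^q x-J_E^q(x-ty)\rangle$ of the duality map along the segment.

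The crux is to control this increment by the modulus of smoothness. Here I would invoke the hypothesis $\rho_E(t)\le C_q t^q$ together with the elementary bound $\tfrac12(\|x+u\|+\|x-u\|)-1\le\rho_E(\|u\|)$ for $\|x\|=1$, homogenized to arbitrary $x$. Converting this second-order norm estimate into a bound on the duality-map increment — equivalently, passing through the Lindenstrauss duality that turns the smoothness of $E$ into the uniform convexity of the dual $E^*$ (the Xu--Roach type characteristic inequalities) — should yield a pointwise estimate of the shape $\langle y,\,J_E^q x-J_E^q(x-ty)\rangle\le c\,t^{q-1}\|y\|^q$, whereupon $\int_0^1 t^{q-1}\,dt=1/q$ collects exactly the factor producing $C_q\|y\|^q$. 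I expect this conversion to be the main obstacle: the definition of $\rho_E$ only controls $\|x+u\|+\|x-u\|$ to first order in the norm, whereas the conclusion demands a genuine $q$-power bound on an inner product involving $J_E^q$, and it is precisely this gap that the $q$-uniform smoothness hypothesis must fill. By contrast the differentiability, the integral representation, and the final constant bookkeeping are routine, which is consistent with the fact that the result is quoted here from \cite{s34} rather than reproved.
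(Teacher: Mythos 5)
The paper does not prove this lemma at all: it is quoted verbatim from Xu \cite{s34}, so the only fair comparison is with Xu's original argument, whose overall architecture your outline does reproduce (the integral representation $\|x-y\|^q-\|x\|^q+q\langle y,J_E^qx\rangle=q\int_0^1\langle y, J_E^qx-J_E^q(x-ty)\rangle\,dt$, followed by a growth estimate on the increment of the duality map). The routine parts of your sketch are sound: for $q>1$ the function $f_q$ is differentiable everywhere (including the origin, where $J_E^q(0)=0$) once $E$ is uniformly smooth, and the norm-to-norm continuity of $J_E^q$ on bounded sets justifies differentiating and integrating $\psi(t)=\|x-ty\|^q$.

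The genuine gap is exactly where you flag it, and it is not a small one: the pointwise estimate $\langle y, J_E^qx-J_E^q(x-ty)\rangle\le c\,t^{q-1}\|y\|^q$ is equivalent to the $(q-1)$-H\"older continuity of $J_E^q$, and by the Xu--Roach characterization this H\"older continuity, the condition $\rho_E(t)\le C_qt^q$, and the inequality of the lemma itself are all \emph{equivalent} formulations of $q$-uniform smoothness. Invoking ``Xu--Roach type characteristic inequalities'' to supply that step therefore assumes a statement of the same depth as the one being proved; a self-contained proof must actually bridge from the first-order information in $\rho_E$ (which controls $\tfrac12(\|x+u\|+\|x-u\|)-1$) to the duality-map increment, which Xu does through the dual route (uniform convexity of $E^*$ and the corresponding lower inequality for $f_p$ on $E^*$) rather than by a direct integration against $\rho_E$. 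A second, lesser inaccuracy: your claim that ``the final constant bookkeeping is routine'' is optimistic. The constant produced by this chain of equivalences is only \emph{some} constant depending on $E$ and $q$, not literally the constant $C_q$ from $\rho_E(t)\le C_qt^q$; the paper (following common abuse in the literature) conflates the two, and if one insists they coincide the bookkeeping is precisely the nontrivial part. So: right skeleton, matching the cited source, but the heart of the argument is deferred to a result that essentially \emph{is} the lemma.
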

\begin{lemma}\label{lemmaw}
\cite{s26}Let $E$ be a $p$-uniformly convex and uniformly smooth Banach space and with its dual $E^*$, $J_E^p$ and $J_{E^*}^q$ are the duality mapping of $E$ and $E^*$, respectively. For any $\{x_n\}\subset E$, and $\{t_n\}\subset (0,1)$ with $\Sigma _{n=1}^Nt_n=1$, the following inequality holds.
$
\Delta_p(J_{E^*}^q(\sum _{n=1}^Nt_nJ_E^p(x_n)),x)\leq \sum _{n=1}^Nt_n\Delta_p(x_n,x),$ $\forall x\in E.
$
\end{lemma}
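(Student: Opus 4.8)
The plan is to reduce the claim to Jensen's inequality (convexity) for the function $\xi \mapsto \tfrac1q\|\xi\|^q$ on the dual space $E^*$. First I would abbreviate $u := J_{E^*}^q\bigl(\sum_{n=1}^N t_n J_E^p(x_n)\bigr)$. Since $E$ is $p$-uniformly convex and uniformly smooth, it is reflexive, strictly convex and smooth, so by Lemma 2.9 the map $J_E^p$ is a single-valued bijection of $E$ onto $E^*$ whose inverse is exactly $J_{E^*}^q$ (with $\tfrac1p+\tfrac1q=1$, using that $E^*$ is $q$-uniformly smooth and uniformly convex). Consequently $J_E^p u = \sum_{n=1}^N t_n J_E^p(x_n)$, and this identity is what drives the entire computation.

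Next I would invoke the first expression for the Bregman distance from Definition~\ref{def2.2}, namely $\Delta_p(u,x) = \tfrac1q\|u\|^p - \langle J_E^p u, x\rangle + \tfrac1p\|x\|^p$, and substitute the identity above into the middle term to obtain $\langle J_E^p u, x\rangle = \sum_{n=1}^N t_n \langle J_E^p(x_n), x\rangle$. For the leading term I would use the norm relation built into the definition of the duality mapping, $\|J_E^p z\| = \|z\|^{p-1}$, together with $(p-1)q = p$, to rewrite $\tfrac1q\|u\|^p = \tfrac1q\|J_E^p u\|^q$ and likewise $\tfrac1q\|x_n\|^p = \tfrac1q\|J_E^p(x_n)\|^q$.

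The heart of the argument is then the estimate $\tfrac1q\bigl\|\sum_{n=1}^N t_n J_E^p(x_n)\bigr\|^q \le \sum_{n=1}^N t_n \tfrac1q\|J_E^p(x_n)\|^q$, which is Jensen's inequality applied to the convex function $\xi \mapsto \tfrac1q\|\xi\|^q$ on $E^*$ at the convex combination $\sum_{n=1}^N t_n J_E^p(x_n)$ (the coefficients being admissible since $\sum_{n=1}^N t_n = 1$). Combining this with the two substitutions above and writing $\tfrac1p\|x\|^p = \sum_{n=1}^N t_n\,\tfrac1p\|x\|^p$ (again using $\sum_{n=1}^N t_n = 1$) collapses everything termwise into $\sum_{n=1}^N t_n\bigl(\tfrac1q\|x_n\|^p - \langle J_E^p(x_n),x\rangle + \tfrac1p\|x\|^p\bigr) = \sum_{n=1}^N t_n \Delta_p(x_n,x)$, which is exactly the asserted inequality.

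I expect the only delicate point to be the clean identification $J_{E^*}^q = (J_E^p)^{-1}$, since everything downstream is a mechanical substitution plus a single convexity estimate; this identification is what genuinely uses the hypotheses that $E$ be reflexive, smooth and strictly convex. If one prefers to avoid quoting the inverse-duality relation directly, the same conclusion can be reached by induction on $N$ from the two-point case, but the convexity route sketched above is the most economical.
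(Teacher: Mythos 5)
Your proposal is correct and coincides with the proof in the cited source \cite{s26} (the paper itself states this lemma without proof): there the inequality is obtained from the convexity in the first argument of the function $V_p(\bar{x},x)=\frac{1}{q}\|\bar{x}\|^q-\langle \bar{x},x\rangle+\frac{1}{p}\|x\|^p$ together with the identity $V_p(J_E^p z,x)=\Delta_p(z,x)$, which is exactly your Jensen-plus-inverse-duality computation using $J_{E^*}^q=(J_E^p)^{-1}$ and $(p-1)q=p$. The only slip is cosmetic: the duality-mapping properties you invoke are those of the paper's lemma quoting \cite{s33} (single-valuedness and surjectivity of $J_E^p$ under smoothness and reflexivity), not ``Lemma 2.9''.
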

\begin{lemma}\label{lemmae}
 Let $E$ be a $p$-uniformly convex and uniformly smooth real Banach space, and $C_1=E$. Then, for any sequences $\{y_n\}$,$\{z_n\}$ and $\{w_n\}$ in $E$ the set
\[
C_{n+1}=\{u\in C_{n}:\Delta_p(y_n,u)\leq\Delta_p(z_n,u)\leq\Delta_p(w_n,u)\}
\]
is closed and convex for each $n\geq 1$.
\end{lemma}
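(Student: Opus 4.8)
The plan is to reduce each of the two defining inequalities to an affine condition in the variable $u$, so that the set becomes a finite intersection of closed half-spaces, and then to close the argument by induction on $n$. First I would recall the explicit expansion of the Bregman distance from Definition~\ref{def2.2}, namely $\Delta_p(x,u)=\frac{1}{q}\|x\|^p-\langle J_E^px,u\rangle+\frac{1}{p}\|u\|^p$, and substitute $x=y_n$, $z_n$, $w_n$ in turn. The decisive observation is that the term $\frac{1}{p}\|u\|^p$ is common to all three expressions and therefore cancels whenever two of them are compared:
\[
\Delta_p(y_n,u)\leq\Delta_p(z_n,u)\Leftrightarrow\langle J_E^pz_n-J_E^py_n,u\rangle\leq\frac{1}{q}\big(\|z_n\|^p-\|y_n\|^p\big),
\]
and likewise $\Delta_p(z_n,u)\leq\Delta_p(w_n,u)$ is equivalent to $\langle J_E^pw_n-J_E^pz_n,u\rangle\leq\frac{1}{q}(\|w_n\|^p-\|z_n\|^p)$. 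Each right-hand condition is a linear inequality in $u$ determined by a fixed functional in $E^*$ and a fixed scalar bound, and hence carves out a closed half-space of $E$.

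Next I would assemble these pieces. A closed half-space $\{u:\langle a,u\rangle\leq b\}$ is closed, because $u\mapsto\langle a,u\rangle$ is continuous, and it is convex; moreover a finite intersection of closed convex sets is again closed and convex. Consequently the set
\[
H_n:=\{u\in E:\Delta_p(y_n,u)\leq\Delta_p(z_n,u)\leq\Delta_p(w_n,u)\}
\]
is closed and convex for every $n$, being the intersection of the two half-spaces described above.

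Finally I would run the induction on $n$. By hypothesis $C_1=E$, which is closed and convex. Assuming $C_n$ is closed and convex, the set $C_{n+1}=C_n\cap H_n$ is an intersection of closed convex sets and is therefore itself closed and convex, which completes the induction and the proof.

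I do not expect a genuine obstacle in this argument. The only point requiring care is verifying that the $\frac{1}{p}\|u\|^p$ term truly cancels in the comparison of Bregman distances; this cancellation is precisely what makes the defining conditions affine rather than merely convex, and everything else is the routine stability of closed convex sets under finite intersection.
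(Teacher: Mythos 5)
Your proposal is correct and follows essentially the same route as the paper: expand $\Delta_p(\cdot,u)$ via its definition, note that the $\frac{1}{p}\|u\|^p$ term cancels so each inequality becomes a closed half-space condition $\langle J_E^pz_n-J_E^py_n,u\rangle\leq\frac{1}{q}(\|z_n\|^p-\|y_n\|^p)$ (respectively with $w_n,z_n$), and conclude by induction since $C_{n+1}$ is the intersection of $C_n$ with these half-spaces. In fact your constants are the correct ones — the paper's displayed equivalences carry an extraneous factor $2$ on the left-hand side, apparently a leftover from the Hilbert-space identity $\phi(x,u)=\|x\|^2-2\langle x,u\rangle+\|u\|^2$ — and your write-up also makes explicit the closedness/convexity of half-spaces and the induction step, which the paper leaves terse.
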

\begin{proof}
First, since $C_1=E$, $C_1$ is closed and convex. Then we assume that $C_n$ is a closed and convex. For each $u\in C_n$, by the definition of the function $\Delta_p$, we have
\[
\Delta_p(y_n,u)\leq\Delta_p(z_n,u)\Leftrightarrow 2\langle J_E^pz_n-J_E^py_n,u \rangle\leq\frac{1}{q}(\|z_n\|^p-\|y_n\|^p),
\]
and
\[
\Delta_p(z_n,u)\leq\Delta_p(w_n,u)\Leftrightarrow 2\langle J_E^pw_n-J_E^pz_n,u \rangle\leq\frac{1}{q}(\|w_n\|^p-\|z_n\|^p).
\]
Hence, we know that $C_{n+1}$ is closed. In addition, we can easily prove that $C_{n+1}$ is a convex. The proof is completed.
\end{proof}

\section{Main Results}
In the section, we assume that the following conditions are satisfied:
\begin{enumerate}[(1)]
	\item $E_1$ and $E_2$ are two $p$-uniformly convex and uniformly smooth real Banach spaces;
	\item $A:E_1\rightarrow E_2$ is a bounded linear operator with adjoint operator $A^*$;
	\item $T:E_1\rightarrow E_1$ is a closed left Bregmen quasi-nonexpansive mapping;
	\item $S:E_2\rightarrow E_2$ is a firmly nonexpansive-like mapping.
\end{enumerate}

In addition, $J_{E_1}^p$ and $J_{E_2}^p$ are the duality mappings of $E_1$ and $E_2$, respectively, and $J_{E_1^*}^q$ is the duality mapping of $E_1^*$. It is worth noting that $E_1^*$ and $E_2^*$ are two $q$-uniformly smooth and uniformly convex Banach spaces, and $J_{E_1}^p=(J_{E_1^*}^q)^{-1}$, where $1<q\leq 2\leq p<\infty$ with $\frac{1}{p}+\frac{1}{q}=1$.

{\bf Algorithm}\quad For given initial values $x_0,\ x_1\in C_1=E_1$, the sequence $\{x_n\}$ generated by the following iterative algorithm:
\begin{equation}\label{eqa}
\left\{
\begin{aligned}
&{w_n=J_{E_1^*}^q(J_{E_1}^px_n+\theta_n J_{E_1}^p(x_n-x_{n-1})),\quad}\\
&{z_n=J_{E_1^*}^q(J_{E_1}^pw_n-\gamma_nA^*J_{E_2}^p(I-S)Aw_n),\quad}\\
&{y_n=J_{E_1^*}^q(\alpha_nJ_{E_1}^pz_n+(1-\alpha_n)J_{E_1}^pTz_n),\quad}\\
&{C_{n+1}=\{u\in C_{n}:\Delta_p(y_n,u)\leq\Delta_p(z_n,u)\leq\Delta_p(w_n,u)\},\quad}\\
&{x_{n+1}=\Pi_{C_{n+1}}x_0,\quad}
\end{aligned}
\right.
\end{equation}
where $\Pi_{C_{n+1}}$ is a Bregman projection of $E_1$ onto $C_{n+1}$, $\{\gamma_n\}$ is a sequence of real number in $ (0,(\frac{q}{C_q\|A\|^q})^{\frac{1}{q-1}})$, where $\frac{1}{c}$ is the $p$-uniformly convexity constant and $C_q$ is the $q$-uniformly smoothness constant, the sequences of real number  $\{\alpha_n\}\subset[a,b]\subset (0,1)$ and $\{\theta_n\}\subset [c,d]\subset (-\infty, +\infty)$.

\begin{lemma}\label{lemmaa}
 Let $E_1$, $E_2$, $T$, $S$, $A$, $A^*$ and $J_{E_1}^p$, $J_{E_2}^p$, $J_{E_1^*}^q$ be the same as above. If $\Gamma=\{x^*|x^*\in F(T);Ax^*\in F(S)\}$, then $\Gamma\subseteq C_n$ for any $n\geq1$.
\end{lemma}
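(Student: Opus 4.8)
The plan is to argue by induction on $n$. The base case is immediate: since $C_1 = E_1$, we trivially have $\Gamma \subseteq C_1$. For the inductive step I would assume $\Gamma \subseteq C_n$ and fix an arbitrary $x^* \in \Gamma$, so that $x^* \in C_n$, $Tx^* = x^*$, and $SAx^* = Ax^*$. Since $C_{n+1}$ is carved out of $C_n$ precisely by the two Bregman-distance inequalities $\Delta_p(y_n,u) \le \Delta_p(z_n,u)$ and $\Delta_p(z_n,u) \le \Delta_p(w_n,u)$, it suffices to verify both at $u = x^*$; then $x^* \in C_{n+1}$, closing the induction.

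The inequality $\Delta_p(y_n,x^*) \le \Delta_p(z_n,x^*)$ is the routine one. Reading $y_n = J_{E_1^*}^q(\alpha_n J_{E_1}^p z_n + (1-\alpha_n) J_{E_1}^p T z_n)$ as the duality image of a convex combination, Lemma \ref{lemmaw} gives $\Delta_p(y_n,x^*) \le \alpha_n \Delta_p(z_n,x^*) + (1-\alpha_n)\Delta_p(Tz_n,x^*)$. Because $T$ is left Bregman quasi-nonexpansive and $x^* \in F(T)$, inequality \eqref{eqw} yields $\Delta_p(Tz_n,x^*) \le \Delta_p(z_n,x^*)$, and substituting collapses the right-hand side to $\Delta_p(z_n,x^*)$.

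The inequality $\Delta_p(z_n,x^*) \le \Delta_p(w_n,x^*)$ is the crux. I would start from the three-point identity \eqref{eqr} with $(x,y,z) = (z_n, x^*, w_n)$ to obtain $\Delta_p(z_n,x^*) - \Delta_p(w_n,x^*) = \Delta_p(z_n,w_n) + \langle w_n - x^*, J_{E_1}^p z_n - J_{E_1}^p w_n\rangle$. Using the defining relation $J_{E_1}^p z_n - J_{E_1}^p w_n = -\gamma_n A^* J_{E_2}^p(I-S)Aw_n$ and transferring $A^*$ onto the other factor, the inner-product term becomes $-\gamma_n \langle Aw_n - Ax^*, J_{E_2}^p(I-S)Aw_n\rangle$. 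Here I would exploit that $S$ is firmly nonexpansive-like together with $SAx^* = Ax^*$: applying \eqref{eqe} to the pair $(Aw_n, Ax^*)$, the term $J_{E_2}^p(Ax^*-SAx^*)$ vanishes, giving $\langle Aw_n - Ax^*, J_{E_2}^p(I-S)Aw_n\rangle \ge \|(I-S)Aw_n\|^p$. For the remaining term $\Delta_p(z_n,w_n)$, since $\frac{1}{q}\|z_n\|^p = \frac{1}{q}\|J_{E_1}^p z_n\|^q$, I would apply Lemma \ref{lemmaq} in the $q$-uniformly smooth dual $E_1^*$ to $J_{E_1}^p w_n - \gamma_n A^* J_{E_2}^p(I-S)Aw_n$, using $J_{E_1^*}^q J_{E_1}^p w_n = w_n$ and $\|A^*\| = \|A\|$, and then expand $\Delta_p(z_n,w_n)$ from its definition. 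After the cancellations (the norm coefficients sum to $\frac{1}{q} - 1 + \frac{1}{p} = 0$, and the two copies of $\gamma_n\langle J_{E_2}^p(I-S)Aw_n, Aw_n\rangle$ annihilate) this leaves $\Delta_p(z_n,w_n) \le \frac{C_q}{q}\gamma_n^q\|A\|^q\|(I-S)Aw_n\|^p$. Combining the two estimates gives $\Delta_p(z_n,x^*) - \Delta_p(w_n,x^*) \le \gamma_n\|(I-S)Aw_n\|^p\bigl(\frac{C_q}{q}\gamma_n^{q-1}\|A\|^q - 1\bigr)$, which is nonpositive exactly because $\gamma_n < (q/(C_q\|A\|^q))^{1/(q-1)}$. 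This pins down the main obstacle: controlling $\Delta_p(z_n,w_n)$ by passing to the dual space and checking that the surviving coefficient is precisely the one annihilated by the prescribed range of $\gamma_n$. The firmly nonexpansive-like estimate and the careful bookkeeping of cancellations are where all the work lies.
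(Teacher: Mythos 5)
Your proof is correct and takes essentially the same approach as the paper: Lemma \ref{lemmaw} together with left Bregman quasi-nonexpansiveness of $T$ for the first inequality, and the firmly nonexpansive-like estimate $\langle Aw_n-Ax^*, J_{E_2}^p(I-S)Aw_n\rangle\geq\|(I-S)Aw_n\|^p$ combined with Lemma \ref{lemmaq} in the $q$-uniformly smooth dual $E_1^*$ for the second, with the prescribed range of $\gamma_n$ making the surviving coefficient nonpositive. Routing the second inequality through the three-point identity \eqref{eqr} and a separate bound on $\Delta_p(z_n,w_n)$, instead of expanding $\Delta_p(z_n,x^*)$ directly as the paper does, is merely a cosmetic reorganization of the same computation.
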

\begin{proof}
 If $\Gamma=\emptyset$, it is obvious that $\Gamma\subseteq C_n$. Conversely, for any $x^*\in \Gamma$, we have $x^*\in F(T)$ and $Ax^*\in F(S)$. According to Lemma \ref{lemmaw} and the definition of left Bregman quasi-nonexpansive mapping $T$, we easily obtain
\begin{equation}\label{eqs}
\begin{aligned}
\qquad\qquad\qquad\qquad\Delta_p(y_n,x^*)&=\Delta_p(J_{E_1^*}^q(\alpha_nJ_{E_1}^pz_n+(1-\alpha_n)J_{E_1}^pTz_n),x^*)\\
&\leq \alpha_n\Delta_p(z_n,x^*)+(1-\alpha_n)\Delta_p(Tz_n,x^*)\\
&\leq \Delta_p(z_n,x^*).
\end{aligned}
\end{equation}

Since $E_1$ is a $p$-uniformly convex and uniformly smooth real Banach space, then $E_1^*$ is a $q$-uniformly smooth and uniformly convex Banach space and $J_{E_1}^p=(J_{E_1^*}^q)^{-1}$. From the property of firmly nonexpansive-like mapping $S$, we easily obtain
$
\langle J_{E_2}^p(I-S)Aw_n,Ax^*-SAw_n\rangle\leq 0 .
$
Further, we have
\begin{equation}\label{eqd}
\begin{aligned}
\langle J_{E_2}^p(I-S)Aw_n,Ax^*-Aw_n\rangle&=\langle J_{E_2}^p(I-S)Aw_n,Ax^*-SAw_n+SAw_n-Aw_n\rangle\\
&=-\|(I-S)Aw_n\|^p+\langle J_{E_2}^p(I-S)Aw_n,Ax^*-SAw_n\rangle\\
&\leq-\|(I-S)Aw_n\|^p.
\end{aligned}
\end{equation}
Again from \eqref{eqo}, \eqref{eqd} and Lemma \ref{lemmaq}, we obtain
\begin{align*}
\Delta_p(z_n,x^*)&=\Delta_p(J_{E_1^*}^q(J_{E_1}^pw_n-\gamma_nA^*J_{E_2}^p(I-S)Aw_n),x^*)\\
&=\frac{1}{q}\|J_{E_1^*}^q(J_{E_1}^pw_n-\gamma_nA^*J_{E_2}^p(I-S)Aw_n)\|^p+\frac{1}{p}\|x^*\|^p\\
&\quad -\langle J_{E_1}^pw_n-\gamma_nA^*J_{E_2}^p(I-S)Aw_n,x^*\rangle\\
&=\frac{1}{q}\|J_{E_1}^pw_n-\gamma_nA^*J_{E_2}^p(I-S)Aw_n\|^q+\frac{1}{p}\|x^*\|^p\\
&\quad -\langle J_{E_1}^pw_n,x^*\rangle+\gamma_n\langle J_{E_2}^p(I-S)Aw_n,Ax^*\rangle\\
&\leq\frac{1}{q}\|J_{E_1}^pw_n\|^q+\frac{1}{p}\|x^*\|^p-\langle J_{E_1}^pw_n,x^*\rangle-\gamma_n\langle J_{E_2}^p(I-S)Aw_n, Aw_n\rangle \\
&\quad +\gamma_n\langle J_{E_2}^p(I-S)Aw_n,Ax^*\rangle+\frac{C_q(\gamma_n\|A\|)^q}{q}\|J_{E_2}^p(I-S)Aw_n\|^q\\
&=\Delta_p(w_n,x^*)+\gamma_n\langle J_{E_2}^p(I-S)Aw_n,Ax^*-Aw_n\rangle\\
&\quad +\frac{C_q(\gamma_n\|A\|)^q}{q}\|(I-S)Aw_n\|^p.
\end{align*}
Since $\{\gamma_n\}$ is a real number sequence contained in $(0,(\frac{q}{C_q\|A\|^q})^{\frac{1}{q-1}})$, we get
\begin{equation}\label{eqf}
\Delta_p(z_n,x^*)\leq\Delta_p(w_n,x^*)+(\frac{C_q(\gamma_n\|A\|)^q}{q}-\gamma_n)\|(I-S)Aw_n\|^p\leq\Delta_p(w_n,x^*).
\end{equation}
From \eqref{eqs} and \eqref{eqf}, we have $x^*\in C_{n+1}$, that is, $\Gamma\subseteq C_n$, $\forall n\geq1$.
\end{proof}

\begin{theorem}\label{thm3.2}
Let $E_1$, $E_2$, $T$, $S$, $A$, $A^*$ and $J_{E_1}^p$, $J_{E_2}^p$, $J_{E_1^*}^q$ be the same as above. If $\Gamma=\{x^*|x^*\in F(T);Ax^*\in F(S)\}\neq\emptyset$, the sequence $\{x_n\}$ generated by Algorithm (3.1) converges strongly to a point $z=\Pi_\Gamma x_0\in \Gamma$.
\end{theorem}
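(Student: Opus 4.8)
The plan is to follow the standard architecture of a shrinking projection proof, adapted to the Bregman setting and to the extra inertial term, splitting the argument into well-posedness, strong convergence of $\{x_n\}$, membership of the limit in $\Gamma$, and identification of the limit as $\Pi_\Gamma x_0$. First I would establish that the iteration is well-defined: by Lemma \ref{lemmae} each $C_n$ is closed and convex, and by Lemma \ref{lemmaa} we have $\Gamma \subseteq C_n$ for every $n$, so since $\Gamma \neq \emptyset$ each $C_{n+1}$ is a nonempty closed convex set and $x_{n+1} = \Pi_{C_{n+1}} x_0$ exists and is unique. Because $C_{n+1} \subseteq C_n$ and, for $n\ge 2$, $x_n = \Pi_{C_n} x_0$ minimizes $\Delta_p(x_0,\cdot)$ over $C_n$, testing this minimality against $x_{n+1} \in C_n$ and against any $x^* \in \Gamma \subseteq C_n$ gives $\Delta_p(x_0,x_n) \leq \Delta_p(x_0,x_{n+1}) \leq \Delta_p(x_0,x^*)$; hence $\{\Delta_p(x_0,x_n)\}$ is nondecreasing and bounded, and therefore convergent.

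Next I would prove $\{x_n\}$ is Cauchy. For $m > n$ one has $x_m \in C_m \subseteq C_{n+1}$, so applying the projection inequality \eqref{eqo} with $x=x_0$ and $z=x_m$ yields $\Delta_p(x_{n+1},x_m) \leq \Delta_p(x_0,x_m) - \Delta_p(x_0,x_{n+1})$; the right-hand side tends to $0$ by convergence of $\{\Delta_p(x_0,x_n)\}$, and then the lower bound \eqref{eqy} forces $\|x_{n+1}-x_m\| \to 0$. Thus $x_n \to z$ for some $z \in E_1$, and in particular $\|x_n - x_{n-1}\| \to 0$. This is where the inertial term must be controlled: since $\|J_{E_1}^p(x_n-x_{n-1})\| = \|x_n-x_{n-1}\|^{p-1} \to 0$ and $\{\theta_n\}$ is bounded, the relation $J_{E_1}^p w_n = J_{E_1}^p x_n + \theta_n J_{E_1}^p(x_n-x_{n-1})$ together with the norm-to-norm continuity of $J_{E_1}^p$ and $J_{E_1^*}^q$ gives $w_n \to z$. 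Because $x_{n+1} \in C_{n+1}$ we have $\Delta_p(y_n,x_{n+1}) \leq \Delta_p(z_n,x_{n+1}) \leq \Delta_p(w_n,x_{n+1})$, and since $w_n \to z$ and $x_{n+1} \to z$ make the last term tend to $0$, a further application of \eqref{eqy} yields $z_n \to z$ and $y_n \to z$.

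Then I would identify $z$ as a point of $\Gamma$. For the operator $S$, rearranging the estimate \eqref{eqf} from Lemma \ref{lemmaa} gives $(\gamma_n - C_q(\gamma_n\|A\|)^q/q)\,\|(I-S)Aw_n\|^p \leq \Delta_p(w_n,x^*) - \Delta_p(z_n,x^*)$; the right-hand side tends to $0$ because $w_n,z_n \to z$, and the coefficient is bounded below by a positive constant since $\gamma_n$ stays inside $(0,(q/C_q\|A\|^q)^{1/(q-1)})$, so $\|(I-S)Aw_n\| \to 0$. As $Aw_n \to Az$, the point $Az$ is an asymptotic fixed point of $S$, whence $Az \in \widehat{F}(S) = F(S)$ by Lemma \ref{lemma2.5}. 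For $T$, the relation $J_{E_1}^p y_n - J_{E_1}^p z_n = (1-\alpha_n)(J_{E_1}^p Tz_n - J_{E_1}^p z_n)$ together with $y_n,z_n \to z$ and $1-\alpha_n \geq 1-b > 0$ gives $J_{E_1}^p Tz_n - J_{E_1}^p z_n \to 0$, hence $Tz_n - z_n \to 0$ and $Tz_n \to z$; closedness of $T$ then yields $Tz = z$. Therefore $z \in F(T)$ and $Az \in F(S)$, i.e., $z \in \Gamma$.

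Finally, passing to the limit in $\Delta_p(x_0,x_{n+1}) \leq \Delta_p(x_0,x^*)$, valid for every $x^* \in \Gamma \subseteq C_{n+1}$, gives $\Delta_p(x_0,z) \leq \Delta_p(x_0,x^*)$ for all $x^* \in \Gamma$; since $z \in \Gamma$, uniqueness of the Bregman projection forces $z = \Pi_\Gamma x_0$. I expect the main obstacle to be the interplay between the inertial step and the shrinking sets: one must first secure the Cauchy property in order to kill the inertial correction $\theta_n J_{E_1}^p(x_n-x_{n-1})$ and obtain $w_n \to z$, and only then can the chain of Bregman inequalities defining $C_{n+1}$ be exploited to transfer convergence to $z_n$ and $y_n$ and, through \eqref{eqf} and the $y_n$-update, to the two operator residuals $\|(I-S)Aw_n\|$ and $\|Tz_n - z_n\|$.
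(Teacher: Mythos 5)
Your proposal is correct and follows the same overall architecture as the paper's proof: well-posedness via Lemmas \ref{lemmae} and \ref{lemmaa}, the Cauchy property from monotonicity and boundedness of $\Delta_p(x_0,x_n)$ together with \eqref{eqo} and \eqref{eqy}, transfer of convergence along the chain $\Delta_p(y_n,x_{n+1})\le\Delta_p(z_n,x_{n+1})\le\Delta_p(w_n,x_{n+1})$, the residual estimates for $S$ (via \eqref{eqf}) and for $T$ (via the $y_n$-update and uniform continuity of $J_{E_1}^p$), and Lemma \ref{lemma2.5} plus closedness of $T$ to conclude $z\in\Gamma$. You deviate in two local but genuine ways, both sound. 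First, to kill the inertial term you argue directly that $\|J_{E_1}^pw_n-J_{E_1}^px_n\|=|\theta_n|\,\|x_n-x_{n-1}\|^{p-1}\to 0$ and invoke norm-to-norm uniform continuity of $J_{E_1^*}^q$ on bounded sets to get $w_n\to z$; the paper instead runs $\Delta_p(w_n,x^*)$ through Lemma \ref{lemmaq} and the three-point identity of Remark \ref{remark2.8} to obtain the quantitative bound $\Delta_p(w_n,x_n)\le \frac{C_q\theta_n^q}{q}\|x_n-x_{n-1}\|^p$ and then applies \eqref{eqy}. Your route is shorter and bypasses the dual smoothness constant entirely; the paper's buys an explicit Bregman-distance estimate on the inertial perturbation. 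Second, you identify the limit by passing to the limit in the minimizing inequality $\Delta_p(x_0,x_{n+1})\le\Delta_p(x_0,x^*)$ and appealing to uniqueness of the Bregman projection, whereas the paper passes to the limit in the variational characterization \eqref{eqi}; the two characterizations are equivalent, so either ending is fine. One caveat you share with the paper: the assertion that $\gamma_n-\frac{C_q(\gamma_n\|A\|)^q}{q}$ is bounded below by a positive constant does not follow from $\gamma_n\in\bigl(0,(\frac{q}{C_q\|A\|^q})^{\frac{1}{q-1}}\bigr)$ alone, since this coefficient vanishes at both endpoints of the interval; strictly one should require $\{\gamma_n\}$ to lie in a compact subinterval, an assumption the paper also uses implicitly at exactly this step, so this is an inherited imprecision rather than a gap specific to your argument.
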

\begin{proof}
By Lemma \ref{lemmae} and Lemma \ref{lemmaa}, we know that $\Pi_{C_{n+1}}x_0$ is well defined and $\Gamma\subseteq C_n$.
According to Algorithm \eqref{eqa}, we know $x_n=\Pi_{C_n}x_0$ and $x_{n+1}=\Pi_{C_{n+1}}x_0$ for each $n\geq1$. Using $\Gamma\subseteq C_n$ and \eqref{eqo}, we have
\begin{equation}\label{eqg}
\Delta_p(x_0,x_n)=\Delta_p(x_0,\Pi_{C_n}x_0)\leq\Delta_p(x_0,x^*),\ x^*\in \Gamma,\ \forall n\geq1.
\end{equation}
It implies that $\{\Delta_p(x_0,x_n)\}$ is bounded. Reusing \eqref{eqo}, we also have
\begin{equation}\label{eqh}
\begin{aligned}
\Delta_p(x_n,x_{n+1})=\Delta_p(\Pi_{C_n}x_0,x_{n+1})&\leq\Delta_p(x_0,x_{n+1})-\Delta_p(x_0,\Pi_{C_n}x_0)\\
&=\Delta_p(x_0,x_{n+1})-\Delta_p(x_0,x_n).
\end{aligned}
\end{equation}
It follows from that $\{\Delta_p(x_0,x_n)\}$ is nondecreasing. Hence, the limit $\underset{n\rightarrow \infty}{\lim}\Delta_p(x_0,x_n)$ exists, and
\begin{equation}\label{eqj}
\underset{n\rightarrow \infty}{\lim}\Delta_p(x_n,x_{n+1})=0.
\end{equation}
It follows from \eqref{eqy} that
\begin{equation}\label{eqk}
\underset{n\rightarrow \infty}{\lim}\|x_{n+1}-x_n\|=0.
\end{equation}
For some positive integers $m$, $n$ with $m\geq n$, we have $x_m=\Pi_{C_m}x_1\subseteq C_n$. Using \eqref{eqo}, we obtain
\begin{equation}\label{eql}
\begin{aligned}
\Delta_p(x_n,x_m)=\Delta_p(\Pi_{C_n}x_0,x_m)&\leq\Delta_p(x_0,x_m)-\Delta_p(x_0,\Pi_{C_n}x_0)\\
&=\Delta_p(x_0,x_m)-\Delta_p(x_0,x_n).
\end{aligned}
\end{equation}
Since the limit $\underset{n\rightarrow \infty}{\lim}\Delta_p(x_0,x_n)$ exists, it follows from \eqref{eql} that $\underset{n\rightarrow \infty}{\lim}\Delta_p(x_n,x_m)=0$ and $\underset{n\rightarrow \infty}{\lim}\|x_m-x_n\|=0$. Therefore, $\{x_n\}$ is a Cauchy sequence. Further, there exists a point $z\in C$ such that $x_n\rightarrow z$.

From Algorithm \eqref{eqa}, Definition \ref{def2.2} and Lemma \ref{lemmaq}, we have
\begin{equation}\label{eqaa}
\begin{aligned}
\Delta_p(w_n,x^*)
&=\frac{1}{q}\|J_{E_1^*}^q(J_{E_1}^px_n+\theta_nJ_{E_1}^p(x_n-x_{n-1}))\|^p+\frac{1}{p}\|x^*\|^p\\
&\quad -\langle J_{E_1}^px_n+\theta_nJ_{E_1}^p(x_n-x_{n-1}),x^*\rangle\\
&=\frac{1}{q}\|J_{E_1}^px_n+\theta_nJ_{E_1}^p(x_n-x_{n-1})\|^q+\frac{1}{p}\|x^*\|^p\\
&\quad -\langle J_{E_1}^px_n,x^*\rangle-\theta_n\langle J_{E_1}^p(x_n-x_{n-1}), x^*\rangle\\
&\leq\frac{1}{q}\|J_{E_1}^px_n\|^q+\frac{1}{p}\|x^*\|^p-\langle J_{E_1}^px_n,x^*\rangle-\theta_n\langle J_{E_1}^p(x_n-x_{n-1}), x^*\rangle \\
&\quad +\theta_n\langle J_{E_1}^p(x_n-x_{n-1}), x_n\rangle+\frac{C_q(\theta_n)^q}{q}\|J_{E_1}^p(x_n-x_{n-1})\|^q\\
&=\frac{1}{q}\|x_n\|^p+\frac{1}{p}\|x^*\|^p-\langle J_{E_1}^px_n,x^*\rangle-\theta_n\langle J_{E_1}^p(x_n-x_{n-1}), x^*\rangle \\
&\quad +\theta_n\langle J_{E_1}^p(x_n-x_{n-1}), x_n\rangle+\frac{C_q(\theta_n)^q}{q}\|x_n-x_{n-1}\|^p\\
&=\Delta_p(x_n,x^*)+\theta_n\langle J_{E_1}^p(x_n-x_{n-1}), x_n-x^*\rangle\\
&\quad +\frac{C_q(\theta_n)^q}{q}\|x_n-x_{n-1}\|^p.
\end{aligned}
\end{equation}
By virtue of Remark \ref{remark2.8} and the definition of $w_n$, we know
\begin{equation}\label{eqas}
\begin{aligned}
\Delta_p(w_n,x^*)&=\Delta_p(w_n,x_n)+\Delta_p(x_n,x^*)+\langle x_n-x^*, J_{E_1}^pw_n-J_{E_1}^px_n\rangle\\
&=\Delta_p(w_n,x_n)+\Delta_p(x_n,x^*)+\theta_n\langle x_n-x^*, J_{E_1}^p(x_n-x_{n-1})\rangle.
\end{aligned}
\end{equation}

By \eqref{eqaa} and \eqref{eqas}, we get
$
\Delta_p(w_n,x_n)\leq\frac{C_q(\theta_n)^q}{q}\|x_n-x_{n-1}\|^p.
$
Then, using \eqref{eqy}, \eqref{eqk} and the boundedness of the sequence $\{\theta_n\}$, we can obtain
\begin{equation}\label{eqad}
\underset{n\rightarrow \infty}{\lim}\|w_n-x_{n}\|=0.
\end{equation}
Using a similar method, we can get
\[
\Delta_p(w_n,x_{n+1})=\Delta_p(w_n,x_n)+\Delta_p(x_n,x_{n+1})+\langle x_n-x_{n+1}, J_{E_1}^pw_n-J_{E_1}^px_n\rangle.
\]
By setting $n\rightarrow \infty$, we have
\begin{equation}\label{eqaf}
\underset{n\rightarrow \infty}{\lim}\|w_n-x_{n+1}\|=0.
\end{equation}
Since $x_{n+1}=\Pi_{C_{n+1}}x_0\in C_{n+1}\subseteq C_n$, we have
\[
\Delta_p(y_n,x_{n+1})\leq\Delta_p(z_n,x_{n+1})\leq\Delta_p(w_n,x_{n+1}).
\]
According to \eqref{eqaf}, we obtain
\begin{equation}\label{eqag}
\underset{n\rightarrow \infty}{\lim}\Delta_p(y_n,x_{n+1})=0,\ \underset{n\rightarrow \infty}{\lim}\Delta_p(z_n,x_{n+1})=0,
\end{equation}
which implies that
$
\underset{n\rightarrow \infty}{\lim}\|y_n-x_{n+1}\|=0,\ \underset{n\rightarrow \infty}{\lim}\|z_n-x_{n+1}\|=0.
$
Hence
\begin{equation}\label{eqah}
\|x_n-z_n\|\leq\|x_{n+1}-x_n\|+\|x_{n+1}-z_n\|\rightarrow 0\ as\ n\rightarrow\infty,
\end{equation}
\begin{equation}\label{eqaj}
\|y_n-z_n\|\leq\|x_{n+1}-y_n\|+\|x_{n+1}-z_n\|\rightarrow 0\ as\ n\rightarrow\infty.
\end{equation}
In addition, since $E_1$ is a $p$-uniformly convex and uniformly smooth real Banach space, then $J_{E_1}^p$ is uniformly norm-to-norm continuous. It follows from Algorithm (3.1) and real number sequence $\{\alpha_n\}$ in $[a,b]\subset (0,1)$ that
\[
\underset{n\rightarrow \infty}{\lim}\|J_{E_1}^pTz_n-J_{E_1}^pz_n\|=\underset{n\rightarrow \infty}{\lim}\frac{1}{1-\alpha_n}\|J_{E_1}^py_n-J_{E_1}^pz_n\|=0,
\]
which also implies that $\underset{n\rightarrow \infty}{\lim}\|Tz_n-z_n\|=0$. By virtue of (3.12) and $x_n\rightarrow z$ we have $z_n\rightarrow z$. Using the closedness of $T$, we obtain $z_n\rightarrow z$ and $Tz=z$. From \eqref{eqy}, \eqref{eqf} and \eqref{eqah}, we have
\begin{align*}
(\gamma_n-\frac{C_q(\gamma_n\|A\|)^q}{q})\|(I-S)Aw_n\|^p&\leq\Delta_p(w_n,x^*)-\Delta_p(z_n,x^*)\\
&=\frac{1}{q}\|w_n\|^p-\frac{1}{q}\|z_n\|^p-\langle J_{E_1}^pw_n-J_{E_1}^pz_n,x^*\rangle\\
&=\Delta_p(w_n,z_n)+\langle J_{E_1}^pw_n-J_{E_1}^pz_n,z_n-x^*\rangle\\
&\leq(\|w_n-z_n\|+\|z_n-x^*\|)\|J_{E_1}^pw_n-J_{E_1}^pz_n\|.
\end{align*}
By setting of $n\rightarrow \infty$, the right-hand side of last inequality tends to 0. Moreover, $\{\gamma_n\}$ is a real number sequence contained in $(0,(\frac{q}{C_q\|A\|^q})^{\frac{1}{q-1}})$, we have
\[
\underset{n\rightarrow \infty}{\lim}\|(S-I)Aw_n\|=0.
\]
Since $A$ is a bounded linear operator, we have $Aw_n\rightarrow Az$. Again according to Definition \ref{def2.3} and Lemma \ref{lemma2.5}, we get $Az\in F(T)$. Then, from \eqref{eqi} and \eqref{eqa}, we have
\begin{equation}\label{eqak}
\langle J_{E_1}^px_0-J_{E_1}^px_n, p-x_n\rangle\leq 0,\ \forall p\in \Gamma.
\end{equation}
By setting $n\rightarrow \infty$ in \eqref{eqak}, we obtain
\begin{equation}\label{eqal}
\langle J_{E_1}^px_0-J_{E_1}^pz, p-z\rangle\leq 0,\ \forall p\in \Gamma.
\end{equation}
Again from \eqref{eqi}, we have $z=\Pi_\Gamma x_0$. Definitively, we obtain that $\{x_n\}$ generated by  {Algorithm} \eqref{eqa} strong converge $z=\Pi_\Gamma x_0\in \Gamma$. The proof is completed.
\end{proof}
\begin{remark}
The significant improvement of the results in this paper is using the shrinking projection algorithm with inertial effects to study the split common fixed problem in the framework of two $p$-uniformly convex and uniformly smooth Banach spaces, and the iterative sequence generated by Algorithm \eqref{eqa} is strongly converge to a solution of the split common fixed point problem.
\end{remark}
\begin{remark}
The field of study in this paper is the $p$-uniformly convex and uniformly smooth Banach space, which is more extensive than the Hilbert space \cite{s22,s23,s24}, the uniformly convex and $ 2 $-uniformly smooth Banach space \cite{s25} and the $ 2 $-uniformly convex and $ 2 $-uniformly smooth real Banach space \cite{s26}. The split common fixed point problem of firmly nonexpansive-like mappings in Theorem \ref{thm3.2} is more general than the split feasibility problem and fixed point problem in \cite{s26,s28}.
\end{remark}

As a corollary of Theorem \ref{thm3.2}, when $E_1$ and $E_2$ reduce to Hilbert spaces, the function $\Delta_p$ is equal to $\Delta_p(x,y)=\frac{1}{2}\|x-y\|^2$ and the Bregman projection $\Pi_C$ is equivalent to the metric projection $P_C$. Then, we obtain the following corollary.

\begin{corollary}
 Let $H_1$ and $H_2$ be two real Hilbert spaces. Let $T:H_1\rightarrow H_1$ be a closed quasi-nonexpansive mapping, $S:H_2\rightarrow H_2$ be a firmly nonexpansive mapping, and $A:H_1\rightarrow H_2$ be a bounded linear operator with adjoint operator $A^*$. For given initial values $x_0, x_1\in C_1=H_1$, the sequence $\{x_n\}$ generated by the following iterative algorithm:
\begin{equation}\label{eqaaa}
\left\{
\begin{aligned}
&{w_n=x_n+\theta_n(x_n-x_{n-1}),\quad}\\
&{z_n=w_n-\gamma_nA^*(I-S)Aw_n,\quad}\\
&{y_n=\alpha_nz_n+(1-\alpha_n)Tz_n,\quad}\\
&{C_{n+1}=\{u\in C_{n}:\|y_n-u\|\leq\|z_n-u\|\leq\|w_n-u\|\},\quad}\\
&{x_{n+1}=P_{C_{n+1}}x_0,\quad}
\end{aligned}
\right.
\end{equation}
where $P_{C_{n+1}}$ is a metric projection of $H_1$ onto $C_{n+1}$, the sequences of real numbers $\{\gamma_n\}\subset(0,\frac{2}{\|A\|^2})$, $\{\alpha_n\}\subset[a,b]\subset (0,1)$ and $\{\theta_n\}\subset[c,d]\subset(-\infty, +\infty)$.
If $\Gamma=\{x^*|x^*\in F(T), Ax^*\in F(S)\}\neq\emptyset$, the sequence $\{x_n\}$ generated by  \eqref{eqaaa} converges strongly to a point $z=P_\Gamma x_0\in \Gamma$.
\end{corollary}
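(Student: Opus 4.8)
The plan is to obtain this Corollary as a direct specialization of Theorem \ref{thm3.2}, so the work reduces to checking that every structural ingredient of the general framework collapses correctly when $E_1=H_1$ and $E_2=H_2$. First I would record the standard facts that a Hilbert space is simultaneously $2$-uniformly convex and $2$-uniformly smooth, so that one may take $p=q=2$; that the duality mapping $J_H^2$ coincides with the identity operator under the Riesz identification $H\cong H^*$, whence $J_{E_1}^p=J_{E_1^*}^q=I$; and that the $q$-uniformly smoothness constant is $C_q=C_2=1$, which follows from the parallelogram identity applied to the inequality in Lemma \ref{lemmaq}. With these identifications the Bregman distance becomes $\Delta_p(x,y)=\frac{1}{2}\|x-y\|^2$ and the Bregman projection $\Pi_C$ reduces to the metric projection $P_C$, exactly as noted in the text preceding the statement.

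Next I would verify that the hypotheses on the mappings transfer. A closed left Bregman quasi-nonexpansive mapping reduces to a closed quasi-nonexpansive mapping, since inequality \eqref{eqw} becomes $\|Tx-x^*\|\le\|x-x^*\|$ for all $x^*\in F(T)$. Likewise, the firmly nonexpansive-like condition \eqref{eqe} with $J_H^2=I$ becomes $\langle Sx-Sy,(x-Sx)-(y-Sy)\rangle\ge 0$, which is precisely the defining inequality of a firmly nonexpansive mapping, as the remark following Definition \ref{def2.3} already records. Thus $T$ and $S$ in the Corollary satisfy conditions $(3)$ and $(4)$ of the general assumptions, and $A$ satisfies condition $(2)$ trivially.

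Then I would confirm that Algorithm \eqref{eqa} collapses to Algorithm \eqref{eqaaa}. Substituting $J_{E_1}^p=J_{E_1^*}^q=I$ into each line turns the definition of $w_n$ into $w_n=x_n+\theta_n(x_n-x_{n-1})$, and similarly reduces the lines for $z_n$ and $y_n$ to their stated Hilbert-space forms; the level sets defining $C_{n+1}$ become the stated norm inequalities because $\Delta_p(\cdot,u)=\frac{1}{2}\|\cdot-u\|^2$. For the step-size I would check the parameter range: with $q=2$ and $C_q=1$ the admissible interval $(0,(\frac{q}{C_q\|A\|^q})^{1/(q-1)})$ becomes exactly $(0,\frac{2}{\|A\|^2})$, matching the hypothesis of the Corollary, while the conditions on $\{\alpha_n\}$ and $\{\theta_n\}$ are carried over unchanged.

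Having matched all the data, I would invoke Theorem \ref{thm3.2} verbatim to conclude that $\{x_n\}$ converges strongly to $\Pi_\Gamma x_0=P_\Gamma x_0\in\Gamma$. I do not expect a genuine obstacle here, since the entire argument is a translation rather than a new proof; the only point requiring care is the bookkeeping that the values $q=2$ and $C_q=1$ produce precisely the Hilbert-space step-size bound, and that the two mapping classes degenerate exactly to their classical counterparts rather than to some strictly weaker notion.
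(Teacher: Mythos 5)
Your proposal is correct and takes essentially the same route as the paper, which obtains the corollary by exactly this specialization ($p=q=2$, $J_{E_1}^p=J_{E_1^*}^q=I$, $\Delta_p(x,y)=\frac{1}{2}\|x-y\|^2$, $\Pi_C=P_C$) noted in the paragraph preceding the statement. If anything, your version is more complete than the paper's, since you explicitly check $C_2=1$ and verify that the admissible interval $\left(0,\left(\frac{q}{C_q\|A\|^q}\right)^{\frac{1}{q-1}}\right)$ collapses to $\left(0,\frac{2}{\|A\|^2}\right)$, a bookkeeping step the paper leaves implicit.
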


\section{Applications}
\subsection{Split fixed point problems and variational inclusion problems}
Let $H$ be a real Hilbert space, and $B:H\rightarrow 2^H$ be a set-valued mapping with domain $D(B):=\{x\in H: B(x)\neq \emptyset\}$. An operator $B: H\rightarrow 2^H$ is called monotone if
$
\langle u-v, x-y\rangle\geq 0,$ $\forall u\in Bx,\ v\in By.
$
Further, $B$ is called maximal monotone if its graph $G(B)=\{(x,y): x\in D(B),\ y\in D(B)\}$ is not properly contained in the graph of any other monotone operator.

The problem of zero points of maximal monotone operator is
\[
\text{finding } x^*\in H, \text{ such that } 0\in B(x^*),
\]
where $B:H\rightarrow 2^H$ is a set-valued maximal monotone operator. Martinet \cite{s35} first proposed proximal point algorithm to solve the problem of a zero point of maximal monotone operator.

\begin{lemma}\label{lemma4.1.1}
 \cite{lab2} Let $H$ be a real Hilbert space. Let $B:H\rightarrow 2^H$ be a maximal monotone operator and $\mu>0$, and its associated resolvent of order $\mu$, defined by
$
J_\mu^B=(I+\mu A)^{-1},
$
where $I$ denotes the identity mapping. Then, the following properties are true:
\begin{enumerate}[(I)]
	\item For each $\mu>0$, $J_\mu^B$ is a sing-valued and firmly nonexpansive mapping;
	\item $D(J_\mu^B)=H$ and $F(J_\mu^B)=B^{-1}(0):=\{x\in D(B),\ 0\in Bx\}$.
\end{enumerate}
\end{lemma}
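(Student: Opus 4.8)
The plan is to work throughout from the defining equivalence of the resolvent: for $x\in H$, a point $u$ satisfies $u=J_\mu^B x$ if and only if $x\in(I+\mu B)u$, that is, $\frac{1}{\mu}(x-u)\in Bu$. With this characterization in hand, statement (I) reduces to two short monotonicity computations. For single-valuedness, I would suppose that $u_1$ and $u_2$ both belong to $J_\mu^B x$, so that $\frac{1}{\mu}(x-u_1)\in Bu_1$ and $\frac{1}{\mu}(x-u_2)\in Bu_2$; applying the monotonicity inequality $\langle u_1-u_2,\frac{1}{\mu}(x-u_1)-\frac{1}{\mu}(x-u_2)\rangle\geq0$ gives $-\frac{1}{\mu}\|u_1-u_2\|^2\geq0$, which forces $u_1=u_2$ and shows that the resolvent assigns at most one value to each point.

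For firm nonexpansiveness, I would set $u=J_\mu^B x$ and $v=J_\mu^B y$, so that $\frac{1}{\mu}(x-u)\in Bu$ and $\frac{1}{\mu}(y-v)\in Bv$. Monotonicity of $B$ then yields $\langle u-v,(x-u)-(y-v)\rangle\geq0$ after clearing the factor $\mu$, and rearranging this reads $\langle u-v,x-y\rangle\geq\|u-v\|^2$, which is precisely the firmly nonexpansive inequality $\langle J_\mu^B x-J_\mu^B y,x-y\rangle\geq\|J_\mu^B x-J_\mu^B y\|^2$. Note that these two steps use only the monotonicity of $B$, not its maximality.

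For statement (II), the fixed-point identity follows at once from the same equivalence: $x=J_\mu^B x$ holds if and only if $x\in(I+\mu B)x$, i.e. $0\in\mu Bx$, i.e. $0\in Bx$, whence $F(J_\mu^B)=B^{-1}(0)$. The one genuinely nontrivial point, and the step I expect to be the main obstacle, is establishing $D(J_\mu^B)=H$, equivalently the surjectivity of $I+\mu B$. This is exactly where \emph{maximality} (rather than mere monotonicity) is indispensable: it is the content of Minty's theorem, which asserts that a monotone operator $B$ on a Hilbert space is maximal if and only if $R(I+\mu B)=H$ for some (equivalently, every) $\mu>0$. I would invoke this theorem directly; combined with the single-valuedness already established, it shows that $J_\mu^B$ is a well-defined single-valued firmly nonexpansive mapping on all of $H$, which completes the proof.
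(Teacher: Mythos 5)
Your proof is correct, and there is nothing in the paper to compare it against: the paper states this lemma as a quoted result from \cite{lab2} and gives no proof of its own. Your argument is the standard one, and it is organized correctly. The defining equivalence $u=J_\mu^B x \Leftrightarrow \frac{1}{\mu}(x-u)\in Bu$ does all the work: your single-valuedness computation $\langle u_1-u_2,\frac{1}{\mu}(x-u_1)-\frac{1}{\mu}(x-u_2)\rangle = -\frac{1}{\mu}\|u_1-u_2\|^2\geq 0$ is right, your derivation of $\langle J_\mu^B x-J_\mu^B y, x-y\rangle\geq\|J_\mu^B x-J_\mu^B y\|^2$ matches exactly the firmly nonexpansive inequality the paper records after Definition 2.3, and the fixed-point identity $x=J_\mu^B x\Leftrightarrow 0\in Bx$ is immediate. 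You also correctly identify the only genuinely deep step, $D(J_\mu^B)=H$, i.e.\ surjectivity of $I+\mu B$, and correctly attribute it to Minty's theorem; invoking that theorem rather than reproving it is entirely appropriate here, since the paper cites the whole lemma at the same level of granularity. Two minor remarks: the statement as printed contains a typo, $J_\mu^B=(I+\mu A)^{-1}$ should read $(I+\mu B)^{-1}$, which you silently and correctly repaired by working with $B$ throughout; and your observation that parts of (I) need only monotonicity while maximality enters solely through the range condition is a worthwhile refinement that the cited formulation does not make explicit.
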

\begin{definition}
Let $H_1$ and $H_2$ be two Hilbert spaces, and $A:H_1\rightarrow H_2$ be a bounded linear operator. Let $B:H_1\rightarrow 2^{H_1}$ and $K:H_2\rightarrow 2^{H_2}$ be two set-valued mappings, $T:H_1\rightarrow H_1$ be a mapping. The split fixed point problem and variational inclusion problem is to find a point $x^*$ such that
\[
x^*\in F(T)\cap K^{-1}(0),\ Ax^*\in B^{-1}(0).
\]
For this problem, we propose the following theorem by the result in Theorem \ref{thm3.2}.
\end{definition}
\begin{theorem}
 Let $H_1$ and $H_2$ be two Hilbert spaces, and $A:H_1\rightarrow H_2$ be a bounded linear operator with adjoint operator $A^*$. Let $B:H_1\rightarrow 2^{H_1}$ and $K:H_2\rightarrow 2^{H_2}$ be two maximal monotone operators and $\mu>0$, $T:H_1\rightarrow H_1$ be a closed quasi-nonexpansive mapping. For given initial values $x_0,\ x_1 \in C_1=H_1$, the sequence $\{x_n\}$ generated by the following iterative algorithm:
\begin{equation}\label{eqz}
\left\{
\begin{aligned}
&{w_n=x_n+\theta_n(x_n-x_{n-1}),\quad}\\
&{z_n=w_n-\gamma_nA^*(I-J_\mu^B)Aw_n,\quad}\\
&{y_n=\alpha_nz_n+(1-\alpha_n)TJ_\mu^Kz_n,\quad}\\
&{C_{n+1}=\{u\in C_{n}:\|y_n-u\|\leq\|z_n-u\|\leq\|w_n-u\|\},\quad}\\
&{x_{n+1}=P_{C_{n+1}}x_0,\quad}
\end{aligned}
\right.
\end{equation}
where  $P_{C_{n+1}}$ is a metric projection of $H_1$ to $C_{n+1}$, the sequences of real numbers $\{\gamma_n\}\subset(0,\frac{2}{\|A\|^2})$, $\{\alpha_n\}\subset[a,b]\subset (0,1)$ and $\{\theta_n\}\subset[c,d]\subset(-\infty, +\infty)$.
If $\Gamma=\{x^*|x^*\in F(T)\cap K^{-1}(0), Ax^*\in B^{-1}(0)\}\neq\emptyset$, the sequence $\{x_n\}$ generated by iterative algorithm \eqref{eqz} converges strongly to a point $z=P_\Gamma x_0\in \Gamma$.
\end{theorem}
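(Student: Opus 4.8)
The plan is to recast the scheme \eqref{eqz} as an instance of the Hilbert-space algorithm in the Corollary to Theorem \ref{thm3.2}, absorbing the two resolvents into the data of that corollary. Concretely, I would put $S := J_\mu^B : H_2 \to H_2$ and introduce the composite mapping $\widetilde{T} := T \circ J_\mu^K : H_1 \to H_1$. With these identifications the three update lines of \eqref{eqz} read exactly $w_n = x_n + \theta_n(x_n - x_{n-1})$, $z_n = w_n - \gamma_n A^*(I-S)Aw_n$ and $y_n = \alpha_n z_n + (1-\alpha_n)\widetilde{T} z_n$, while the definition of $C_{n+1}$ and the projection step are untouched. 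Hence, once $S$ and $\widetilde{T}$ are shown to meet the hypotheses of the corollary and the solution set is seen to coincide with $\Gamma$, the conclusion $x_n \to z = P_\Gamma x_0$ is immediate.

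First I would dispatch $S = J_\mu^B$: by Lemma \ref{lemma4.1.1}, for every $\mu>0$ the resolvent $J_\mu^B$ is single-valued and firmly nonexpansive on all of $H_2$, with $F(J_\mu^B) = B^{-1}(0)$, which is precisely the requirement on $S$ in the corollary.

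The heart of the argument is to verify that $\widetilde{T} = T J_\mu^K$ is a closed quasi-nonexpansive mapping with $F(\widetilde{T}) = F(T) \cap K^{-1}(0)$. Closedness is routine: $J_\mu^K$ is nonexpansive hence continuous, so if $x_n \to x$ and $\widetilde{T} x_n \to y$, then $u_n := J_\mu^K x_n \to u := J_\mu^K x$ and $T u_n \to y$, whence $Tu = y$ by closedness of $T$, i.e.\ $\widetilde{T} x = y$. For the fixed-point set, the inclusion $F(T) \cap K^{-1}(0) \subseteq F(\widetilde{T})$ is trivial. The reverse inclusion is the step I expect to be the main obstacle. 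Fix $p \in F(T) \cap K^{-1}(0)$, which is nonempty because $\Gamma \neq \emptyset$, and suppose $\widetilde{T} x = x$. Chaining quasi-nonexpansiveness of $T$ with nonexpansiveness of $J_\mu^K$ gives $\|x - p\| = \|T J_\mu^K x - p\| \le \|J_\mu^K x - p\| \le \|x - p\|$, which forces $\|J_\mu^K x - p\| = \|x - p\|$. The firm nonexpansiveness $\|J_\mu^K x - p\|^2 \le \langle J_\mu^K x - p,\, x - p\rangle$ then makes the Cauchy--Schwarz inequality tight, so $J_\mu^K x = x$; therefore $x \in K^{-1}(0)$ and $x = Tx \in F(T)$. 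The same chain of inequalities shows $\|\widetilde{T}x - p\| \le \|x-p\|$ for every $p$ in this fixed-point set, giving quasi-nonexpansiveness of $\widetilde{T}$.

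Finally, with $F(\widetilde{T}) = F(T) \cap K^{-1}(0)$ and $F(S) = B^{-1}(0)$, the solution set $\{x^*: x^* \in F(\widetilde{T}),\, Ax^* \in F(S)\}$ of the corollary's problem is exactly $\Gamma$, nonempty by hypothesis. Invoking the Corollary to Theorem \ref{thm3.2} with data $(\widetilde{T}, S)$ then yields strong convergence of $\{x_n\}$ to $z = P_\Gamma x_0 \in \Gamma$, completing the proof.
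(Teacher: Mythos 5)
Your proposal is correct and takes essentially the same route as the paper, which reduces the theorem to the Hilbert-space corollary of Theorem \ref{thm3.2} by setting $S=J_\mu^B$ and replacing $T$ with the composite mapping $TJ_\mu^K$. You actually supply more than the paper's one-line proof: your verification that $TJ_\mu^K$ is closed and quasi-nonexpansive with $F(TJ_\mu^K)=F(T)\cap K^{-1}(0)$ (via equality in the firm-nonexpansiveness and Cauchy--Schwarz chain) is precisely the step the paper dismisses as ``obvious.''
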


\begin{proof}
It is obvious that $TJ_\mu^K$ is closed quasi-nonexpansive mapping from the property of $T$ and Lemma \ref{lemma4.1.1}. Hence, the strong convergence theorem of  iterative algorithm \eqref{eqz} is obviously proved.
\end{proof}

\subsection{Split fixed point problems and equilibrium problems}
Let $C$ be a nonempty closed and convex subset of a Hilbert space $H$. Let bifunction $F:C\times C\rightarrow R$ satisfy the following conditions:
\begin{description}
\item[(A1)]$F(x,x)=0$, $\forall x\in C$;
\item[(A2)]$F(x,y)+F(y,x)\leq0$, $\forall x,y\in C$;
\item[(A3)]$\lim_{n\rightarrow\infty}F(tz+(1-t)x,y)\leq F(x,y)$, $\forall x,y,z\in C$;
\item[(A4)]For each $x\in C$, the function $y\mapsto F(x,y)$ is convex and lower semi-continuous.
\end{description}

Then, the so-called equilibrium problem for $F$ is to find a point $x^*\in C$ such that
$
F(x^*,x)\geq 0,$ $\forall x\in C,
$
and the set of solutions of equilibrium problem is denoted by $EP(F)$.
\begin{lemma}
 \cite{s37}\quad Let $C$ be a nonempty closed convex subset of a Hilbert space $H$, and let $F:C\times C\rightarrow R$ be a bifunction satisfying (A1)-(A4). Let $r>0$ and $x\in H$. Then there exists a point $z\in C$ such that
$
F(z,y)+\frac{1}{r}\langle y-z,z-x\rangle\geq 0,$ $\forall y\in C.
$
\end{lemma}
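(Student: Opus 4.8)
The statement is the classical existence result for the resolvent of an equilibrium problem, and the natural route is the Fan--KKM theorem. Fix $r>0$ and $x\in H$, and define
\[
\Phi(z,y):=F(z,y)+\frac{1}{r}\langle y-z,z-x\rangle,\quad z,y\in C,
\]
so that the conclusion is precisely that $\bigcap_{y\in C}G(y)\neq\emptyset$, where $G(y):=\{z\in C:\Phi(z,y)\geq0\}$. First I would verify that $G$ is a KKM map. For fixed $z$, the map $y\mapsto\Phi(z,y)$ is convex, since $F(z,\cdot)$ is convex by (A4) and the inner-product term is affine; moreover $\Phi(z,z)=0$ by (A1). Hence if some convex combination $\bar y=\sum_i\lambda_i y_i$ belonged to no $G(y_i)$, then $0=\Phi(\bar y,\bar y)\leq\sum_i\lambda_i\Phi(\bar y,y_i)<0$, a contradiction; so $\bar y\in\bigcup_iG(y_i)$.

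The difficulty is the closedness and compactness required by Fan--KKM in an infinite-dimensional, possibly unbounded $C$. To circumvent this I would pass to the auxiliary multifunction
\[
H(y):=\Big\{z\in C:-F(y,z)+\frac{1}{r}\langle y-z,z-x\rangle\geq0\Big\}.
\]
Using monotonicity (A2) one checks $G(y)\subseteq H(y)$, so $H$ is a KKM map as well; and since $z\mapsto-F(y,z)+\frac{1}{r}\langle y-z,z-x\rangle$ is concave (note the $-\frac1r\|z\|^2$ contribution) and upper semicontinuous by (A4), each $H(y)$ is weakly closed and convex. I would then restrict everything to $C_R:=C\cap\{z:\|z-x\|\leq R\}$, which is weakly compact, apply the Fan--KKM theorem on $C_R$ to produce $z_R\in\bigcap_{y\in C_R}H(y)$, and use the quadratic penalty to bound $\{z_R\}$ uniformly: testing with a fixed $w\in C$ and bounding the convex function $F(w,\cdot)$ below by an affine function shows that $\frac1r\|z_R-w\|^2$ is dominated by a quantity growing at most linearly in $\|z_R-w\|$. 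Thus for $R$ large the resulting point $z^{*}$ lies in the open ball and satisfies $-F(y,z^{*})+\frac1r\langle y-z^{*},z^{*}-x\rangle\geq0$ for every $y\in C$.

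It remains to convert this $H$-inequality into the required inequality $\Phi(z^{*},y)\geq0$, and this is the only place where (A3) enters. Fixing $y\in C$ and $t\in(0,1)$, I would substitute $y_t:=ty+(1-t)z^{*}$ into the $H$-inequality, use the convexity of $F(y_t,\cdot)$ together with $F(y_t,y_t)=0$ to estimate $-F(y_t,z^{*})$, divide by $t$, and then let $t\downarrow0$; the upper hemicontinuity (A3) gives $\limsup_{t\downarrow0}F(y_t,y)\leq F(z^{*},y)$, which yields $F(z^{*},y)+\frac1r\langle y-z^{*},z^{*}-x\rangle\geq0$. Since $y$ is arbitrary, $z^{*}$ is the desired point. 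The main obstacle throughout is the compactness issue in the second step; the passage from $G$ to $H$ is exactly what turns the quadratic term to our advantage, simultaneously delivering the weak closedness and the coercivity that bound the approximate solutions.
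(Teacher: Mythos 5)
The paper offers no proof of this lemma at all---it is quoted with the citation \cite{s37} (Combettes--Hirstoaga, where the result goes back to Blum and Oettli)---so there is no in-paper argument to compare against; judged on its own, your proposal is the standard and correct existence proof. Your three ingredients are exactly the classical ones: the KKM property of $G$ from (A1) and (A4); the passage via monotonicity (A2) to the Minty-type sets $H(y)$, whose concavity and upper semicontinuity in $z$ (thanks to the $-\frac{1}{r}\|z\|^{2}$ contribution) deliver weak closedness and, combined with a continuous affine minorant of the proper convex lsc function $F(w,\cdot)$, the coercive bound making the truncated solutions $z_R$ uniformly bounded; and the final conversion back using (A1), (A3), (A4). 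One presentational wrinkle deserves flagging: after extracting $z^{*}=z_R$ with $\|z^{*}-x\|<R$, you assert that $z^{*}$ satisfies the $H$-inequality for \emph{every} $y\in C$, but $F(\cdot,z^{*})$ carries no convexity or continuity hypothesis in its first argument, so this global claim does not follow directly from the construction on $C_R$. Fortunately it is also unnecessary: your Minty step only evaluates the $H$-inequality at $y_t=ty+(1-t)z^{*}$, which lies in $C_R$ for all small $t>0$ precisely because $z^{*}$ is interior to the ball and $C$ is convex; dividing by $t$ and letting $t\downarrow 0$ with (A3) then yields $F(z^{*},y)+\frac{1}{r}\langle y-z^{*},z^{*}-x\rangle\geq 0$ for every $y\in C$, after which $z^{*}\in H(y)$ for all $y$ follows from (A2) if wanted. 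With that reordering, your argument is complete and is essentially the Blum--Oettli proof underlying the cited lemma.
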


\begin{lemma}
	\cite{s37}\quad Assume that $F:C\times C\rightarrow R$ be a bifunction satisfying (A1)-(A4). For $r>0$ and $x\in H$, define a mapping $T_r^F:H\rightarrow H$ as follows:
$
T_r^F(x)=\{z\in C:F(z,y)+\frac{1}{r}\langle y-z,z-x\rangle\geq 0,\forall y\in C\},$ $\forall x\in H.
$
Then
\begin{enumerate}[(1)]
	\item $T_r^F$ is single-valued;
	\item $
	\|T_r^Fx-T_r^Fy\|^2\leq\langle T_r^Fx-T_r^Fy,x-y\rangle;
	$
	\item $F(T_r^F)=EP(F)$  is nonempty, closed and convex.
\end{enumerate}
\end{lemma}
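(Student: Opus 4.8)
The plan is to treat parts (1) and (2) by a single algebraic device and part (3) by reducing its topological content to the firm nonexpansiveness established in (2). Throughout I would exploit the relation defining $T_r^F$ by inserting the two resolvent points as the free test element $y$, then add the two resulting inequalities and cancel the bifunction terms with assumption (A2).

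First, for single-valuedness, suppose $z_1,z_2\in T_r^F x$. I would put $y=z_2$ in the inequality defining $z_1$ and $y=z_1$ in the one defining $z_2$, obtaining
\[
F(z_1,z_2)+\frac{1}{r}\langle z_2-z_1, z_1-x\rangle\ge 0, \qquad F(z_2,z_1)+\frac{1}{r}\langle z_1-z_2, z_2-x\rangle\ge 0.
\]
Adding these and discarding the bifunction contributions via (A2), which gives $F(z_1,z_2)+F(z_2,z_1)\le 0$, leaves $\langle z_2-z_1, z_1-x\rangle+\langle z_1-z_2, z_2-x\rangle\ge 0$, whose left side collapses to $-\|z_1-z_2\|^2$. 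Hence $z_1=z_2$. For (2) I would run the identical substitution with $z_1=T_r^F x$ and $z_2=T_r^F y$; after adding and again applying (A2), the surviving inner-product terms rearrange to $\|z_1-z_2\|^2\le\langle z_1-z_2, x-y\rangle$, which is exactly the claimed firmly nonexpansive inequality.

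For (3) I would first record the identity $F(T_r^F)=EP(F)$: a point $w$ satisfies $T_r^F w=w$ iff, taking $z=w$ in the defining relation, the term $\frac{1}{r}\langle y-w, w-w\rangle$ vanishes and one is left with $F(w,y)\ge 0$ for all $y\in C$, i.e. $w\in EP(F)$; the converse uses (A1) together with the uniqueness from (1). To obtain closedness and convexity I would not argue with $F$ directly but instead upgrade (2): by Cauchy--Schwarz, $\|T_r^Fx-T_r^Fy\|^2\le\langle T_r^Fx-T_r^Fy, x-y\rangle\le\|T_r^Fx-T_r^Fy\|\,\|x-y\|$, so $T_r^F$ is nonexpansive, and the fixed-point set of a nonexpansive self-map of a Hilbert space is closed and convex by the classical argument (closedness from continuity, convexity via the parallelogram identity). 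Thus $EP(F)=F(T_r^F)$ inherits both properties.

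The hard part is the nonemptiness assertion in (3). Unlike the algebraic claims, this is a genuine existence statement and does not follow from (A1)--(A4) alone; my plan is to derive it from the preceding existence lemma (which already guarantees that $T_r^F$ is everywhere defined) together with a coercivity or boundedness/compactness hypothesis on $C$, through a Ky Fan / KKM minimax argument, exactly as in the source \cite{s37}. I would flag explicitly that, absent such a standing assumption, $EP(F)$ may be empty, so the nonemptiness clause should be read under the hypotheses carried over from \cite{s37}.
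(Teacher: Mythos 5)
Your proof is correct, but note that there is no proof in the paper to compare it against: the lemma is quoted from Combettes and Hirstoaga \cite{s37} without argument, and what you have written is essentially the standard proof from that source. The substitution-and-add device --- testing the inequality defining $T_r^Fx$ at the point $T_r^Fy$ and vice versa, then discarding the bifunction terms via (A2) --- is exactly how (1) and (2) are proved there, and your algebra checks out: the inner products collapse to $-\|z_1-z_2\|^2\geq 0$ for single-valuedness and rearrange to $\|T_r^Fx-T_r^Fy\|^2\leq\langle T_r^Fx-T_r^Fy,\,x-y\rangle$ for firm nonexpansiveness. The identification $F(T_r^F)=EP(F)$ is also fine, with one small simplification available: the converse direction needs only the uniqueness from (1), not (A1), since a point of $EP(F)$ trivially satisfies the relation defining $T_r^F$ at itself. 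For closedness and convexity your route (Cauchy--Schwarz gives nonexpansiveness, then the classical strict-convexity argument for fixed-point sets of nonexpansive self-maps of a Hilbert space) is valid; an equally direct alternative is to note that (2) already makes $T_r^F$ quasi-nonexpansive, and fixed-point sets of quasi-nonexpansive maps in Hilbert space are closed and convex.

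Your caveat about nonemptiness is the sharpest point in the proposal, and it is right: (A1)--(A4) alone do not force $EP(F)\neq\emptyset$. For instance, $C=H=\mathbb{R}$ with $F(x,y)=y-x$ satisfies all four conditions, yet no $x^*$ has $F(x^*,y)\geq 0$ for all $y$. Accordingly, the lemma in \cite{s37} concludes only that $EP(F)$ is closed and convex; the word ``nonempty'' in the paper's transcription is an overstatement, though harmless in context because the theorem of Section 4.2 where the lemma is used separately assumes $\Gamma\neq\emptyset$. Your plan to recover nonemptiness only under an additional hypothesis (e.g.\ bounded $C$, via the KKM/Ky Fan argument) is the correct way to read the statement rather than a gap in your proof.
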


\begin{definition}
Let $C$ and $Q$ be two nonempty closed convex subsets of real Hilbert spaces $H_1$ and $H_2$, respectively, and $A:H_1\rightarrow H_2$ be a bounded linear operator, $T:C\rightarrow C$ be a mapping, $F:C\times C\rightarrow R$ be a bifunction satisfying (A1)-(A4). The split fixed point problem and equilibrium problem is to find a point $x^*$ such that
\[
x^*\in F(T),\ Ax^*\in EP(F).
\]
\end{definition}
\begin{theorem}
 Let $C$ and $Q$ be two nonempty closed convex subsets of real Hilbert spaces $H_1$ and $H_2$, respectively, $T:C\rightarrow C$ be a closed quasi-nonexpansive mapping, $F:C\times C\rightarrow R$ be a bifunction satisfying (A1)-(A4), and $A:H_1\rightarrow H_2$ be a bounded linear operator with adjoint operator $A^*$. For given initial values $x_0,\ x_1\in C_1=C$, the sequence $\{x_n\}$ generated by the following iterative algorithm:
\begin{equation}\label{eq4.2}
\left\{
\begin{aligned}
&{w_n=x_n+\theta_n(x_n-x_{n-1}),}\\
&{z_n=P_C(w_n-\gamma_nA^*(I-T_r^F)Aw_n),}\\
&{y_n=\alpha_nz_n+(1-\alpha_n)Tz_n,}\\
&{C_{n+1}=\{u\in C_{n}:\|y_n-u\|\leq\|z_n-u\|\leq\|w_n-u\|\},}\\
&{x_{n+1}=P_{C_{n+1}}x_0,}
\end{aligned}
\right.	
\end{equation}
where $P_{C_{n+1}}$ is a metric projection of $H_1$ onto $C_{n+1}$, the sequences of real numbers $\{\gamma_n\}\subset(0,\frac{2}{\|A\|^2})$, $\{\alpha_n\}\subset[a,b]\subset (0,1)$ and $\{\theta_n\}\subset[c,d]\subset(-\infty, +\infty)$. If $\Gamma=\{x^*|x^*\in F(T),\ Ax^*\in EP(F)\}\neq\emptyset$, the sequence $\{x_n\}$ generated by iterative algorithm \eqref{eq4.2} converges strongly to a point $z=P_\Gamma x_0\in \Gamma$.
\end{theorem}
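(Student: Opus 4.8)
The plan is to derive this theorem as the Hilbert-space specialization of Theorem \ref{thm3.2}, i.e.\ from the Corollary following it, upon taking the firmly nonexpansive-like mapping $S$ to be the equilibrium resolvent $T_r^F$. First I would check that $T_r^F$ supplies exactly the structure that mapping played in the main result. By the second listed property of $T_r^F$ from \cite{s37}, namely $\|T_r^Fx-T_r^Fy\|^2\le\langle T_r^Fx-T_r^Fy,x-y\rangle$, the mapping $T_r^F$ is firmly nonexpansive on $H_2$; this is precisely the firmly nonexpansive-like condition \eqref{eqe} of Definition \ref{def2.3} specialized to a Hilbert space, as recorded just after that definition. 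Moreover $F(T_r^F)=EP(F)$ is nonempty, closed and convex, so the requirement $Ax^*\in EP(F)$ is literally $Ax^*\in F(T_r^F)$ and the solution set $\Gamma=\{x^*:\,x^*\in F(T),\ Ax^*\in F(T_r^F)\}$ has the form treated by Theorem \ref{thm3.2}. Since $T:C\to C$ is closed quasi-nonexpansive, all of the mapping hypotheses of the Corollary hold with $S=T_r^F$.

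The one structural difference between Algorithm \eqref{eq4.2} and the algorithm of the Corollary is the extra metric projection $P_C$ in the $z_n$-step, inserted so that $z_n\in C$ and hence $Tz_n$ is well defined. I would show that this modification does not disturb the convergence argument. Writing $\tilde z_n=w_n-\gamma_nA^*(I-T_r^F)Aw_n$ so that $z_n=P_C\tilde z_n$, and noting that $\Gamma\subseteq F(T)\subseteq C$ forces $P_Cx^*=x^*$ for every $x^*\in\Gamma$, nonexpansivity of the metric projection gives $\|z_n-x^*\|=\|P_C\tilde z_n-P_Cx^*\|\le\|\tilde z_n-x^*\|$, that is $\Delta_p(z_n,x^*)\le\Delta_p(\tilde z_n,x^*)$. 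Combining this with the estimate \eqref{eqf} applied to $\tilde z_n$ (which has exactly the form of $z_n$ in the main algorithm) reproduces the decisive descent inequality $\Delta_p(z_n,x^*)\le\Delta_p(w_n,x^*)$. Hence the conclusion of Lemma \ref{lemmaa}, $\Gamma\subseteq C_n$ for all $n$, persists; since $C_1=C$ is closed and convex, the closedness and convexity of the sets $C_n$ (Lemma \ref{lemmae}) and the well-definedness of $P_{C_{n+1}}x_0$ are likewise unaffected.

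With these two points in hand, the remainder of the proof of Theorem \ref{thm3.2} transfers verbatim: the monotonicity and convergence of $\{\Delta_p(x_0,x_n)\}$, the Cauchy property of $\{x_n\}$ with limit $z$, the vanishing of $\|w_n-x_n\|$, $\|z_n-x_{n+1}\|$ and $\|(I-T_r^F)Aw_n\|$, the identifications $Tz=z$ and $Az\in F(T_r^F)=EP(F)$ obtained from closedness of $T$ and boundedness of $A$, and finally the variational characterization $z=P_\Gamma x_0$. I expect the only genuine point requiring care to be the insertion of $P_C$: one must be sure it preserves rather than destroys the inequality $\Delta_p(z_n,x^*)\le\Delta_p(w_n,x^*)$ underlying Lemma \ref{lemmaa}. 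Because $P_C$ is nonexpansive toward every point of $C$ and $\Gamma\subseteq C$, this preservation is automatic, so no additional estimate is needed and the theorem follows.
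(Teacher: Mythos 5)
Your proposal is correct and follows essentially the route the paper intends: the paper states this theorem without any written proof, presenting it (like the variational-inclusion case before it) as an immediate application of Theorem 3.2 through its Hilbert-space corollary, with $S=T_r^F$, whose firm nonexpansiveness and the identity $F(T_r^F)=EP(F)$ are exactly the content of the cited lemma of Combettes--Hirstoaga. Your explicit check that the inserted projection $P_C$ preserves the key inequality $\|z_n-x^*\|\le\|w_n-x^*\|$ for $x^*\in\Gamma$ (because $\Gamma\subseteq F(T)\subseteq C$ and $P_C$ is nonexpansive) correctly supplies the one detail the paper glosses over.
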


\section{Numerical examples}
In this section, we come up with some numerical examples to demonstrate the effectiveness and realization of convergence of Theorem \ref{thm3.2}. All codes were written in Matlab R2018b, and ran on a Lenovo ideapad 720S with 1.6 GHz Intel Core i5 processor and 8GB of RAM. Using numerical experiments, we will compare the convergence speed of our algorithm with the algorithm in \cite{s28}. Ma et al. \cite{s28} proved strong convergence theorems of split feasibility problems and fixed point problems of quasi-$\phi$-nonexpansive mapping in Banach spaces and introduced the following algorithm.
\begin{theorem}\label{thm}
 \cite{s28} Let $E_1$ be a $ 2 $-uniformly convex and $ 2 $-uniformly smooth real Banach space with best smoothness constant $k>0$, $E_2$ be a smooth, strictly convex, and reflective Banach space. Let $T:E_1\rightarrow E_1$ be a closed quasi-$\phi$-nonexpansive mapping with $F(T)\neq \emptyset$, $A:E_1\rightarrow E_1$ be a bounded linear operator with adjoint $A^*$, and $Q$ be a nonempty, closed, and convex subset of $E_2$. Let $x_1\in E_1$ and $C_1=E_1$, and $\{x_n\}$ be a sequence generated by
\[
\left\{
\begin{aligned}
&{z_n=J_1^{-1}(J_1x_n-\gamma A^*J_2(I-P_Q)Ax_n),}\\
&{y_n=J_1^{-1}(\alpha_nJ_1z_n+(1-\alpha_n)J_1Tz_n),}\\
&{C_{n+1}=\{u\in C_{n}:\phi(u,y_n)\leq\phi(u,x_n),\ \phi(u,z_n)\leq\phi(u,x_n)\},}\\
&{x_{n+1}=\Pi_{C_{n+1}}x_1,}
\end{aligned}
\right.
\]
where $P_Q$ is the metric projection of $E_2$ onto $Q$ and $\Pi_{C_{n+1}}$ is the generalized projection of $E_1$ onto $C_{n+1}$, $\{\alpha_n\}$ is a sequence in $(0,\delta]$, $\delta<1$, and $\gamma$ is a positive constant satisfying $0<\gamma<\frac{1}{\|A\|^2 k^2}$. If $\Gamma=\{x^*|x^*\in F(T), Ax^*\in Q\}\neq\emptyset$, then the sequence $\{x_n\}$ converges strongly to a point $z=\Pi_\Gamma x_1$.
\end{theorem}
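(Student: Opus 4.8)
The plan is to run the standard shrinking--projection (CQ) argument; in the present setting it is exactly the non-inertial, $p=q=2$, $S=P_Q$ specialization of the proof of Theorem \ref{thm3.2}, with the Bregman distance $\Delta_p$ replaced by the Lyapunov functional $\phi$ and the Bregman projection by Alber's generalized projection $\Pi$. First I would check that the iteration is well defined: arguing as in Lemma \ref{lemmae}, after writing $\phi(u,v)=\|u\|^2-2\langle u,J_1v\rangle+\|v\|^2$ each constraint $\phi(u,y_n)\le\phi(u,x_n)$ and $\phi(u,z_n)\le\phi(u,x_n)$ reduces to a closed half-space condition in $u$, so every $C_{n+1}$ is closed and convex and $\Pi_{C_{n+1}}x_1$ exists and is unique.

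Second, I would prove $\Gamma\subseteq C_n$ for all $n$ (the analog of Lemma \ref{lemmaa}), which keeps every $C_n$ nonempty and legitimizes the final projection onto $\Gamma$. Fix $x^*\in\Gamma$, so $x^*\in F(T)$ and $Ax^*\in Q=F(P_Q)$. Since $T$ is quasi-$\phi$-nonexpansive, the $p=2$ form of Lemma \ref{lemmaw} (whose second-argument version is the correct one here, because $\Delta_2(x,y)=\tfrac12\phi(y,x)$) gives $\phi(x^*,y_n)\le\alpha_n\phi(x^*,z_n)+(1-\alpha_n)\phi(x^*,Tz_n)\le\phi(x^*,z_n)$. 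For the $z_n$-step I would combine the $2$-uniform smoothness inequality (the $q=2$ case of Lemma \ref{lemmaq}) applied to $\|J_1x_n-\gamma A^*J_2(I-P_Q)Ax_n\|^2$ with the firm nonexpansiveness of the metric projection $P_Q$, namely $\langle J_2(I-P_Q)Ax_n,\,Ax^*-P_QAx_n\rangle\le0$, to reach an estimate of the shape $\phi(x^*,z_n)\le\phi(x^*,x_n)+\gamma(\|A\|^2k^2\gamma-1)\,\|(I-P_Q)Ax_n\|^2$. Because $0<\gamma<1/(\|A\|^2k^2)$ the residual coefficient is negative, so $\phi(x^*,z_n)\le\phi(x^*,x_n)$, and with the previous line this gives $x^*\in C_{n+1}$.

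Third, I would extract strong convergence exactly as in Theorem \ref{thm3.2}. From $x_n=\Pi_{C_n}x_1$, $x_{n+1}\in C_{n+1}\subseteq C_n$ and the projection inequality \eqref{eqo} in its $\phi$-form, the sequence $\phi(x_1,x_n)$ is nondecreasing and bounded above by $\phi(x_1,x^*)$, hence convergent; the three-point identity analogous to \eqref{eqr} then forces $\phi(x_n,x_m)\to0$ for $m\ge n$, so $\{x_n\}$ is Cauchy and $x_n\to z$. Evaluating the defining inequalities of $C_{n+1}$ at $u=x_{n+1}$ gives $\phi(x_{n+1},z_n)\to0$ and $\phi(x_{n+1},y_n)\to0$, whence (by the $p$-uniform-convexity lower bound, the $\phi$-analog of \eqref{eqy}) $\|z_n-x_n\|\to0$, $\|y_n-z_n\|\to0$ and $z_n\to z$. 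The $y_n$-step with uniform norm-to-norm continuity of $J_1$ yields $\|Tz_n-z_n\|\to0$, so closedness of $T$ gives $z\in F(T)$; the $z_n$-step estimate together with $\|z_n-x_n\|\to0$ forces $\|(I-P_Q)Ax_n\|\to0$, and as $A$ is bounded linear and $P_Q$ continuous, $Az=P_QAz\in Q$, so $z\in\Gamma$. Finally, the variational characterization \eqref{eqi} applied to $x_n=\Pi_{C_n}x_1$ with $\Gamma\subseteq C_n$ gives $\langle J_1x_1-J_1x_n,\,p-x_n\rangle\le0$ for all $p\in\Gamma$; letting $n\to\infty$ and using continuity of $J_1$ yields $\langle J_1x_1-J_1z,\,p-z\rangle\le0$, i.e. $z=\Pi_\Gamma x_1$.

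I expect the main obstacle to be precisely the $z_n$-step estimate in the genuinely Banach setting: one must expand $\|J_1x_n-\gamma A^*J_2(I-P_Q)Ax_n\|^2$ through the $2$-uniform smoothness inequality (which introduces the smoothness constant $k$) while simultaneously exploiting the metric-projection inequality $\langle J_2(I-P_Q)Ax_n,\,Ax^*-P_QAx_n\rangle\le0$, and it is only the quantitative balance between these two, governed by the step bound $\gamma<1/(\|A\|^2k^2)$, that produces the sign-definite residual term $-\,c\,\|(I-P_Q)Ax_n\|^2$ needed both to place $\Gamma$ inside every $C_n$ and to drive $\|(I-P_Q)Ax_n\|\to0$ in the limit.
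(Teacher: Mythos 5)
The paper contains no proof of this statement---Theorem \ref{thm} is quoted verbatim from Ma et al. \cite{s28} solely as a benchmark for the numerical experiments---but your proposal is correct and follows exactly the template of the paper's own proof of Theorem \ref{thm3.2}, of which this result is the non-inertial ($\theta_n=0$), $p=q=2$, $S=P_Q$ specialization: closedness and convexity of $C_{n+1}$ as in Lemma \ref{lemmae}, the inclusion $\Gamma\subseteq C_n$ via the $q=2$ case of Lemma \ref{lemmaq}, the convexity inequality of Lemma \ref{lemmaw} (with your correctly noted argument swap $\Delta_2(x,y)=\tfrac12\phi(y,x)$) and the metric-projection inequality, then the Cauchy argument from monotonicity and boundedness of $\phi(x_1,x_n)$, the residual estimates, and the variational characterization \eqref{eqi} at the end. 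One touch-up: in the last step you need not invoke continuity of $P_Q$, which is not immediate in a general smooth, strictly convex, reflexive $E_2$; since $P_QAx_n=Ax_n-(I-P_Q)Ax_n\rightarrow Az$ strongly and $Q$ is closed, $Az\in Q$ follows directly---this parallels the paper's appeal to Lemma \ref{lemma2.5} for the mapping $S$ in the proof of Theorem \ref{thm3.2}.
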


To make sure the initial conditions of Theorem \ref{thm} and Theorem \ref{thm3.2} are consistent, the initial conditions are given as follows:

Let $E_1=R$ and $E_2=R\times R$ with the Euclidean norm,  and $C=[0,+\infty)$ and $Q=[0, +\infty)\times (-\infty,0]$. Let $A:E_1\rightarrow E_2$ be a bounded linear operator and defined as $Ax=(\frac{x}{2}, \frac{x}{3}),\ \forall x\in E_1$ with its adjoint $A^*(u,v)=\frac{u}{2}+\frac{v}{3},\ \forall (u,v)\in E_2$.

\begin{example}\label{ex5.2}
(Ma et al.) Let $T:C\rightarrow C$ be defined as $Tx=\frac{1}{4}x, \ \forall x\in C$, and $P_Q:E_2\rightarrow Q$ be a metric projection. In addition, we choose parameters $\gamma=1$, $\alpha_n=\frac{1}{7}$. For given initial value $x_1\in C_1=C$, the iterative algorithm in Theorem \ref{thm} can be simplified as
\[
\left\{
\begin{aligned}
&{Ax_n=(\frac{x_n}{2}, \frac{x_n}{3}),}\\
&{z_n=x_n-A^*(I-P_Q)Ax_n,}\\
&{y_n=\frac{1}{7}z_n+(1-\frac{1}{7})Tz_n,}\\
&{C_{n+1}=\{u\in C_{n}:|y_n-u|\leq|x_n-u|, |z_n-u|\leq|x_n-u|\},}\\
&{x_{n+1}=P_{C_{n+1}}x_1.}
\end{aligned}
\right.
\]
Then, the sequence $\{x_n\}$ converges strongly $0$.
\end{example}

\begin{example}\label{ex5.3}
 (Our algorithm with inertial effects in Theorem \ref{thm3.2}) Let $T:C\rightarrow C$ be defined as $Tx=\frac{1}{4}x, \ \forall x\in C$, and $S=P_Q:E_2\rightarrow Q$ be a metric projection. In addition, we choose parameters $\gamma_n=1$, $\alpha_n=\frac{1}{7}$ and $\theta_n=\frac{1}{2}$. For given initial values $x_0=x_1\in C_1=C$, the iterative algorithm in Theorem \ref{thm3.2} can be simplified as
\[
\left\{
\begin{aligned}
&{w_n=x_n+\frac{1}{2}(x_n-x_{n-1}),}\\
&{z_n=w_n-A^*(I-P_Q)Aw_n,}\\
&{y_n=\frac{1}{7}z_n+(1-\frac{1}{7})Tz_n,}\\
&{C_{n+1}=\{u\in C_{n}:|y_n-u|\leq|z_n-u|\leq|w_n-u|\},}\\
&{x_{n+1}=P_{C_{n+1}}x_0.}
\end{aligned}
\right.
\]
Then, the sequence $\{x_n\}$ converges strongly to $0$.

Next, taking initial values $x_1=6$, $x_1=3$ in Example \autoref{ex5.2} and $x_0=x_1=6$, $x_0=x_1=3$ in Example \autoref{ex5.3}. We get the following \autoref{table1} and \autoref{figure1} to demonstrate rate of convergence of \autoref{thm} and \autoref{thm3.2}.

\begin{table}[H]
\renewcommand\arraystretch{1}
\centering
\caption{Numerical results of \autoref{thm} and \autoref{thm3.2}}
\resizebox{\textwidth}{!}{ %

\begin{tabular}{ccccc} 
\hline
\toprule
\multirow{2}{*}{$ n $} &\multicolumn{2}{c}{Ma et al.}&\multicolumn{2}{c}{Algorithm 3.1}   \\
\cline{2-3}
\cline{4-5} 
   &  $x_1=6$ & $x_1=3$  & $x_0=x_1=6$  & $x_0=x_1=3$   \\
  \midrule
  0 & / & / & 6 & 3   \\
  1 &  6 & 3  & 6 & 3   \\
  2 & 3.952380952380953 & 1.976190476190476 & 3.619047619047619 & 1.809523809523809   \\
  3 & 2.603552532123961 & 1.301776266061980 & 1.895691609977324 & 0.947845804988662   \\
  4 & 1.715038572748323 & 0.857519286374162 & 0.935536119209588 & 0.467768059604794   \\
  5 & 1.129747631254848 & 0.564873815627424 & 0.448463346033803 & 0.224231673016901   \\
  6 & 0.744198836461527 & 0.372099418230763 & 0.211743715446802 & 0.105871857723401   \\
  \vdots & \vdots & \vdots  & \vdots & \vdots   \\
  21 & 0.001420045289491 & 0.000710022644746 & 0.000002127155711 & 0.000001063577855  \\
  22 & 0.000935426658951 & 0.000467713329475 & 0.000000986132411 & 0.000000493066206  \\
  23 & 0.000616193751531 & 0.000308096875766 & 0.000000457162771 & 0.000000228581385  \\
  24 & 0.000405905407755 & 0.000202952703877 & 0.000000211936762 & 0.000000105968381  \\
  25 & 0.000267382133680 & 0.000133691066840 & 0.000000098252052 & 0.000000049126026  \\

  \bottomrule
\end{tabular}
}
  \label{table1}
\end{table}

\begin{figure}[H]
\centering
\begin{minipage}[c]{0.5\textwidth}
\centering
\includegraphics[scale=0.4]{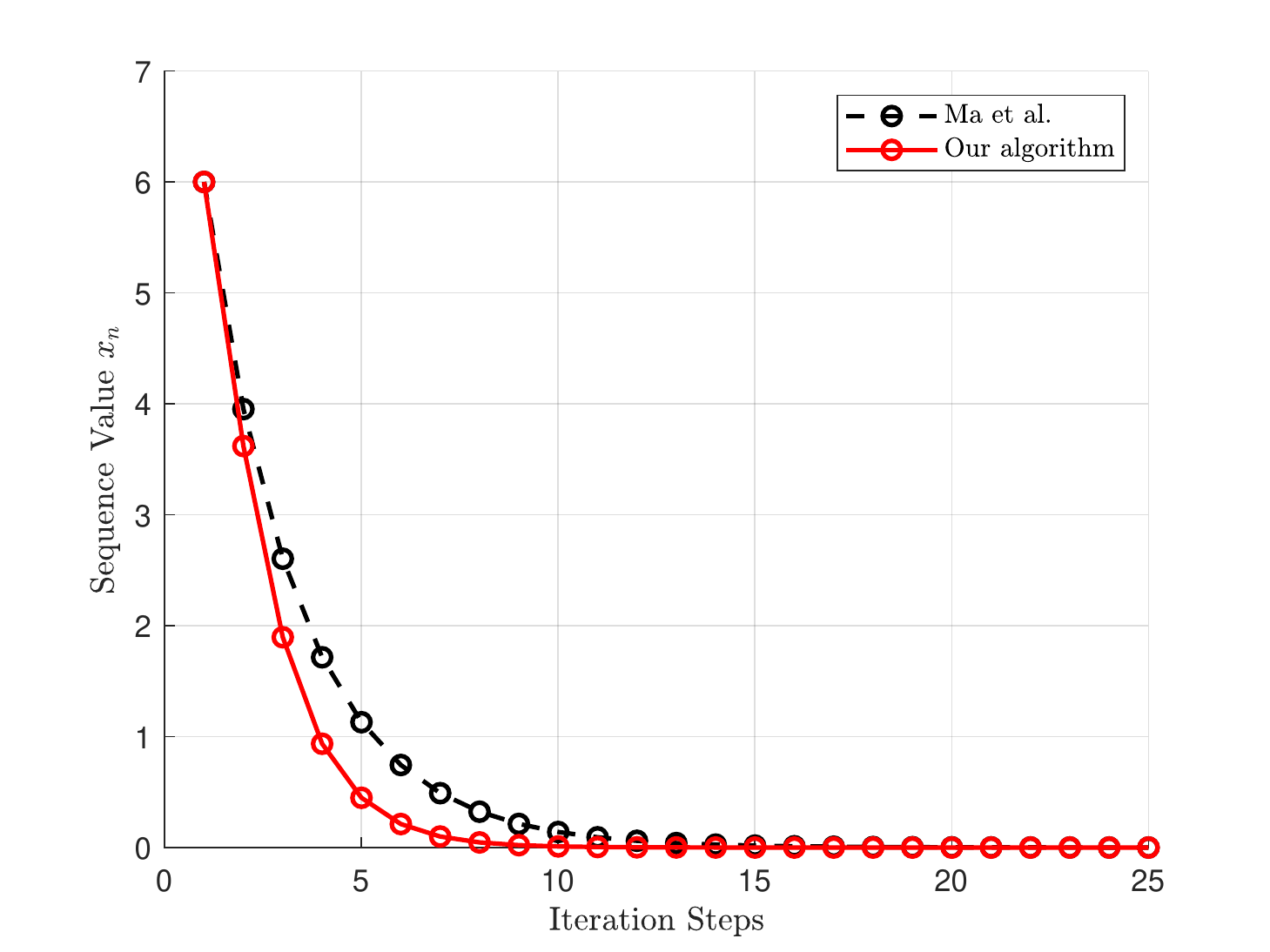}
\end{minipage}%
\begin{minipage}[c]{0.5\textwidth}
\centering
\includegraphics[scale=0.4]{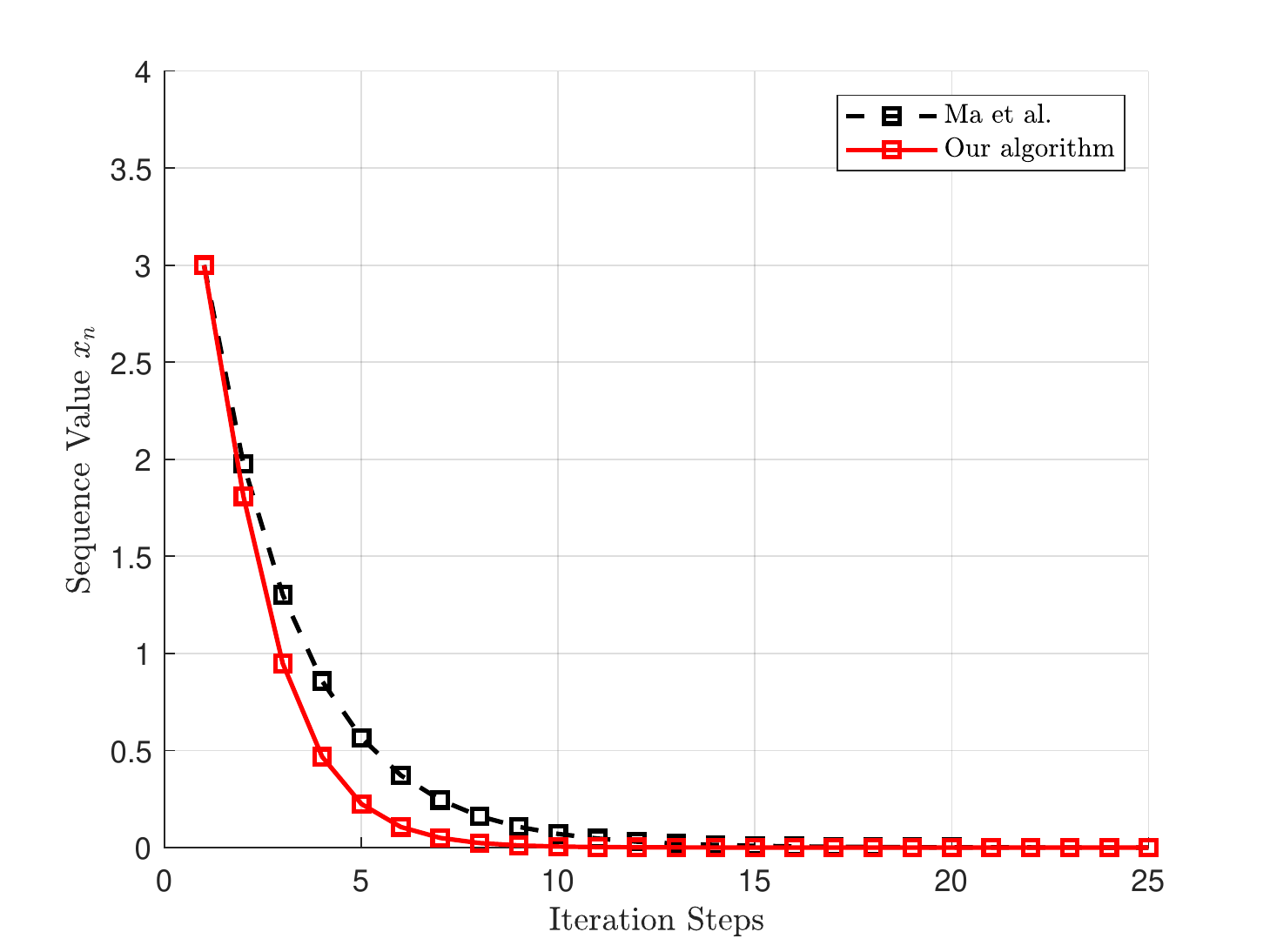}
\end{minipage}
\caption{Analysis of convergence speed of  \autoref{thm} and  \autoref{thm3.2}}
\label{figure1}
\end{figure}

It is worth noting that $\gamma_n$, $\alpha_n$ and $\theta_n$ are only constant step-size and initial point $x_0=x_1$(there is no using an inertial effect in the first iterative process) in Example \ref{ex5.3}.  To change this, we consider the following four cases of the step-size parameters $\gamma_n$, $\alpha_n$ and $\theta_n$, with the initial points $x_0=8$, $x_1=6$.
\begin{description}
	\item[Case 1:]  $\gamma_n=1$, $\alpha_n=\frac{1}{7}$ and $\theta_n=\frac{1}{2}$;
	\item[Case 2:]  $\gamma_n=\frac{2n+3}{2n}$, $\alpha_n=\frac{1}{7}$ and $\theta_n=\frac{1}{2}$;
	\item[Case 3:] $\gamma_n=\frac{2n+3}{2n}$, $\alpha_n=\frac{n}{7n+5}$ and $\theta_n=\frac{1}{2}$;
	\item[Case 4:]  $\gamma_n=\frac{2n+3}{2n}$, $\alpha_n=\frac{n}{7n+5}$ and $\theta_n=\frac{2n+1}{10n+2}$.
\end{description}

\begin{table}[H]
\renewcommand\arraystretch{1}

\centering
\caption{Numerical results of Case1-4}
\resizebox{\textwidth}{!}{ %
\begin{tabular}{ccccc} 
\hline
\toprule
{$ n $} &{Case 1}&{Case 2}&{Case 3}&{Case 4}   \\
  \midrule
  0 & 8 & 8 & 8 & 8   \\
  1 &  6 & 6  & 6 & 6   \\
  2 & 3.377777777777778 & 2.744444444444444 & 2.654166666666667 & 2.606770833333333   \\
  3 & 1.721058201058201 & 1.144272486772487 & 1.062511878654971 & 0.982533042700857   \\
  4 & 0.838240362811792 & 0.466087018140590 & 0.414427676387483 & 0.349299329383015   \\
  5 & 0.399106638351990 & 0.189975923807574 & 0.161779387393689 & 0.121333554378606   \\
  6 & 0.187756126213986 & 0.078232012035175 & 0.063949280327919 & 0.042025571691499   \\

  \vdots & \vdots & \vdots  & \vdots & \vdots   \\

  21 & 0.000001877748043 & 0.000000362109760 & 0.000000229101124 & 0.000000057064087  \\
  22 & 0.000000870508717 & 0.000000164098265 & 0.000000103109472 & 0.000000025326458  \\
  23 & 0.000000403560514 & 0.000000074452809 & 0.000000046477831 & 0.000000011270697  \\
  24 & 0.000000187087193 & 0.000000033815734 & 0.000000020979640 & 0.000000005027255  \\
  25 & 0.000000086732002 & 0.000000015373490 & 0.000000009481898 & 0.000000002246927  \\

  \bottomrule
\end{tabular}
}
  \label{table2}
\end{table}

\begin{figure}[H]
\centering
\begin{minipage}[c]{0.5\textwidth}
\centering
\includegraphics[scale=0.4]{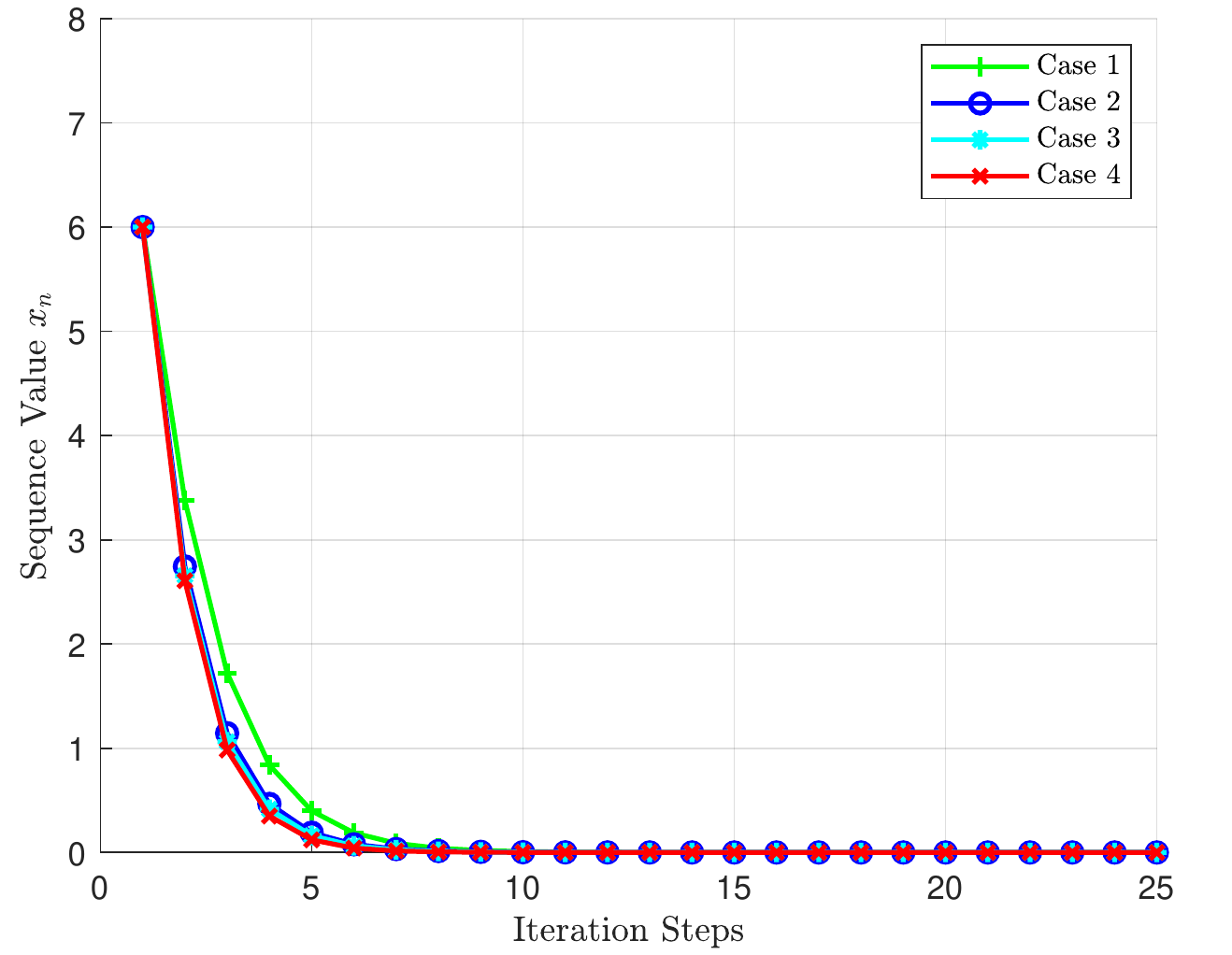}
\end{minipage}%
\begin{minipage}[c]{0.5\textwidth}
\centering
\includegraphics[scale=0.4]{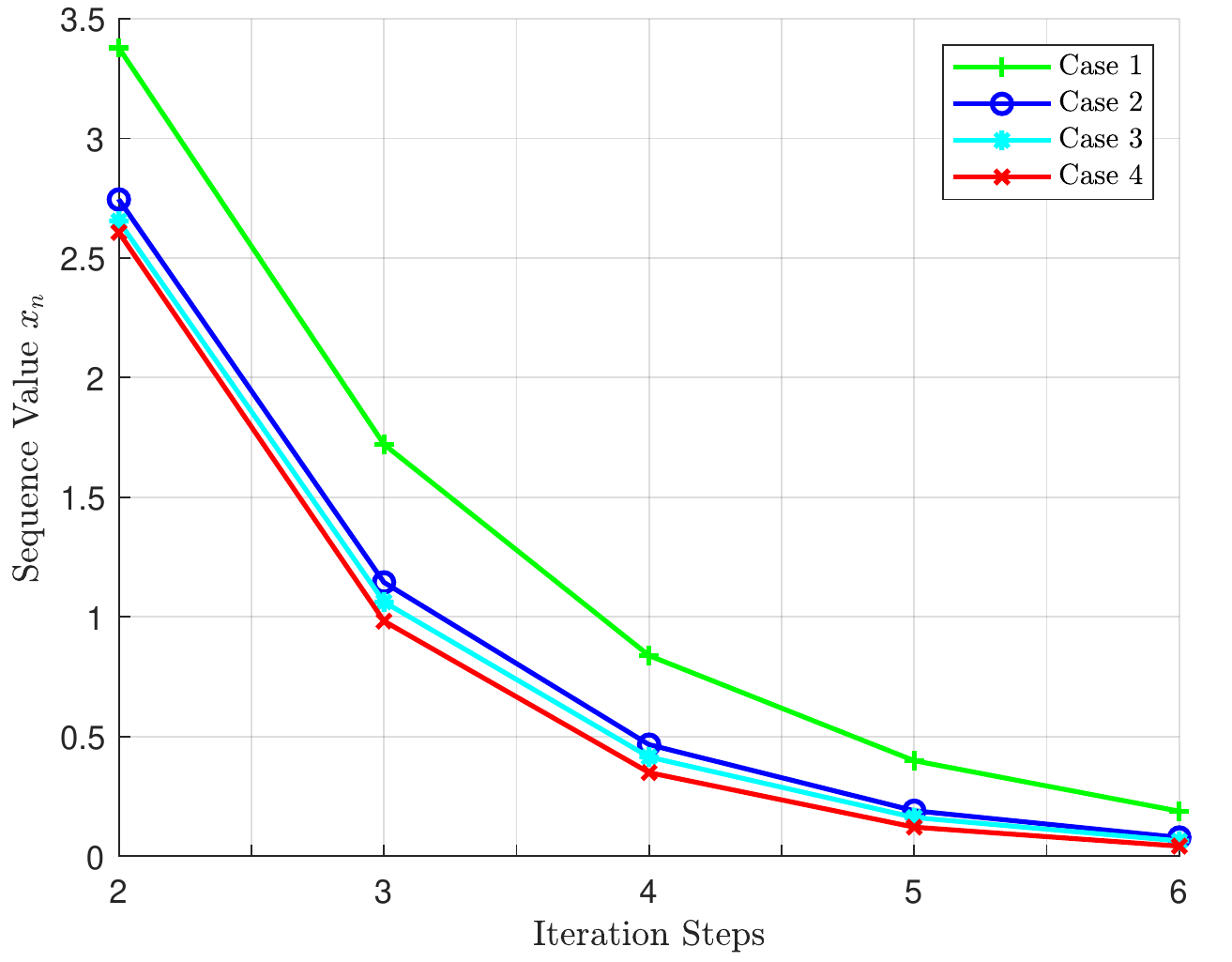}
\end{minipage}
\caption{Analysis of convergence speed of Case1-4}
\label{figure2}
\end{figure}
\end{example}

\begin{remark}
	\begin{enumerate}[(I)]
		\item From  \autoref{figure1} and \autoref{table1}, we found that the convergence rate of Theorem \ref{thm3.2} is faster than \autoref{thm}. This also shows the efficiency of our proposed Algorithm \eqref{eqa} in this paper.
		\item From \autoref{figure2} and \autoref{table2}, we know that the convergence speed of Theorem \ref{thm3.2} is improved under suitable condition of step-size.
	\end{enumerate}
\end{remark}


\begin{thebibliography}{10}

\bibitem{s29}
F. Alvarez and H. Attouch, {\em An inertial proximal method for maximal monotone operators via discretization of a nonlinear oscillator with damping}, Set-Valued Anal., 2001, 9(1-2), 3--11.

\bibitem{s1}
C. Byrne, {\em Iterative oblique projection onto convex sets and the split feasibility problem}, Inverse Problems, 2002, 18(2), 441--453.

\bibitem{s8}
C. Byrne, Y. Censor, A. Gibali and S. Reich, {\em The split common null point problem}, J. Nonlinear Convex Anal., 2012, 13(4), 759-775.

\bibitem{ceng2019inertial}
L.C. Ceng, A. Petru{\c{s}}el, C.F. Wen and J.C. Yao, {\em Inertial-like subgradient extragradient methods for variational inequalities and fixed points of asymptotically nonexpansive and strictly pseudocontractive mappings}, Mathematics, 2019, 7(9), 860.

\bibitem{s3}
Y. Censor, T. Bortfeld, B. Martin and A. Trofimov, {\em A unified approach for inversion problems in intensity-modulated radiation therapy}, Phys. Med. Biol., 2006, 51(10), 2353--2365.

\bibitem{s2}
Y. Censor and T. Elfving, {\em A multiprojection algorithm using Bregman projections in a product space}, Numer. Algorithms, 1994, 8(2), 221--239.

\bibitem{s4}
Y. Censor, T. Elfving, N. Kopf and T. Bortfeld, {\em The multiple-sets split feasibility problem and its applications for inverse problems}, Inverse Problems, 2005, 21(6), 2071.

\bibitem{s7}
Y. Censor, A. Gibali and S. Reich, {\em Algorithms for the split variational inequality problem}, Numer. Algorithms, 2012, 59(2), 301--323.

\bibitem{s5}
Y. Censor, A. Motova and A. Segal, {\em Perturbed projections and subgradient projections for the multiple-sets split feasibility problem}, J. Math. Anal. Appl., 2007, 327(2), 1244--1256.

\bibitem{s9}
Y. Censor and A. Segal, {\em The split common fixed point problem for directed operators}, J. Convex Anal., 2009, 16(2), 587--600.


\bibitem{lab5}
S.Y. Cho, {\em Strong convergence analysis of a hybrid algorithm for nonlinear operators in a Banach space}, J. Appl. Anal. Comput., 2018, 8(1), 19--31.

\bibitem{lab6}
S.Y. Cho, B.A. Bin Dehaish and X. Qin, {\em Weak convergence of a splitting algorithm in Hilbert spaces}, J. Appl. Anal. Comput., 2017, 7(2), 427--438.


\bibitem{s33}
I. Cioranescu, {\em Geometry of Banach Spaces, Duality Mappings and Nonlinear Problems}, Springer Science \& Business Media, 2012.

\bibitem{s37}
P.L. Combettes and S.A. Hirstoaga, {\em Equilibrium programming in Hilbert spaces}, J. Nonlinear Convex Anal., 2005, 6(1), 117--136.

\bibitem{s6}
L. Liu, {\em A hybrid steepest descent method for solving split feasibility problems involving nonexpansive mappings}, J. Nonlinear Convex Anal., 2019, 20(3), 471--488.

\bibitem{lab4}
L. Liu, X. Qin and R.P. Agarwal, {\em Iterative methods for fixed points and zero points of nonlinear mappings with applications}, Optimization, 2019, 1--21.


\bibitem{s28}
Z.L. Ma, L. Wang and S.S. Chang, {\em On the split feasibility problem and fixed point problem of quasi-$\phi$-nonexpansive mapping in Banach spaces}, Numer. Algorithms, 2019, 80(4), 1203--1218.

\bibitem{s32}
P.E. Maing{\'e} and A. Moudafi, {\em Convergence of new inertial proximal methods for DC programming}, SIAM J. Optim., 2008, 19(1), 397--413.

\bibitem{s35}
B. Martinet, {\em Br{\`e}ve communication. R{\'e}gularisation d'in{\'e}quations variationnelles par approximations successives}, Revue fran{\c{c}}aise d'informatique et de recherche op{\'e}rationnelle. S{\'e}rie rouge, 1970, 4(R3), 154--158.


\bibitem{lab1}
X. Qin and N.T. An, {\em Smoothing algorithms for computing the projection onto a Minkowski sum of convex sets}, Comput. Optim. Appl., 2019, 74(3), 821--850.

\bibitem{lab3}
X. Qin, A. Petrusel and J.C. Yao, {\em CQ iterative algorithms for fixed points of nonexpansive mappings and split feasibility problems in Hilbert spaces}, J. Nonlinear Convex Anal., 2018, 19(1), 157-165.

\bibitem{lab2}
X. Qin and J.C. Yao, {\em A viscosity iterative method for a split feasibility problem}, J. Nonlinear Convex Anal., 2019, 20(8), 1497-1506.

\bibitem{shehu2019modified}
Y. Shehu, O. Iyiola and E. Akaligwo, {\em Modified inertial methods for finding common solutions to variational inequality problems}, Fixed Point Theory, 2019, 20(2), 683-702.

\bibitem{s26}
Y. Shehu, F. Ogbuisi and O. Iyiola, {\em Convergence analysis of an iterative algorithm for fixed point problems and split feasibility problems in certain Banach spaces}, Optimization, 2016, 65(2), 299--323.


\bibitem{s24}
W. Takahashi, {\em Mann and Halpern iterations for the split common fixed point problem in Banach spaces}, Linear Nonlinear Anal., 2017, 3, 1--18.

\bibitem{s23}
W. Takahashi, {\em Weak and strong convergence theorems for new demimetric mappings and the split common fixed point problem in Banach spaces}, Numer. Funct. Anal. Optim., 2018, 39(10), 1011--1033.

\bibitem{s22}
W. Takahashi and J.C. Yao, {\em Strong convergence theorems by hybrid methods for the split common null point problem in Banach spaces}, Fixed Point Theory Appl., 2015, 2015(1), 87.

\bibitem{s25}
J.F. Tang, S.S. Chang, L. Wang and X.R. Wang, {\em On the split common fixed point problem for strict pseudocontractive and asymptotically nonexpansive mappings in Banach spaces}, J. Inequal. Appl., 2015, 2015(1), 305.

\bibitem{s38}
V.H. Dang, {\em New inertial algorithm for a class of equilibrium problems}, Numer. Algorithms, 2019, 80(4), 1413--1436.

\bibitem{s34}
H.K. Xu, {\em Inequalities in Banach spaces with applications}, Nonlinear Anal., 1991, 16(12), 1127--1138.

\bibitem{s12}
X.F. Zhang, L. Wang, Z.L. Ma and L.J. Qin, {\em The strong convergence theorems for split common fixed point problem of asymptotically nonexpansive mappings in Hilbert spaces}, J. Inequal. Appl., 2015, 2015(1), 1.



\end{thebibliography}
\end{document}